 \numberwithin{equation}{section}
\theoremstyle{plain}
\newtheorem{thm}{Theorem}[section]
\newtheorem{cor}[thm]{Corollary}
\newtheorem{lem}[thm]{Lemma}
\newtheorem{prop}[thm]{Proposition}
\theoremstyle{definition}
\theoremstyle{remark}
\newtheorem{rem}[thm]{Remark}
\newcommand{\R}{\mathbb{R}}
\newcommand{\E}{\mathbb{E}}
\newcommand{\I}{\infty}
\newcommand{\tr}{\text{tr}}
\newcommand{\bp}{\begin{proof}[\ensuremath{\mathbf{Proof}}]}
\newcommand{\bs}{\begin{proof}[\ensuremath{\mathbf{Solution}}]}
\newcommand{\ep}{\end{proof}}
\begin{document}


\title{Analysis of Hamilton-Jacobi-Bellman equations \\ arising in stochastic singular control}

\author{Ryan Hynd\thanks{This material is based upon work supported by the National Science Foundation under Grant No. DMS-1004733.}\\
Courant Institute of Mathematical Sciences\\
New York University\\
251 Mercer Street\\
New York, NY 10012-1185 USA}

\maketitle

\begin{abstract}
We study the partial differential equation
$$
\max\left\{Lu - f, H(Du)\right\}=0
$$
where $u$ is the unknown function, $L$ is a second-order elliptic operator, $f$ is a given smooth function and $H$ is a convex function. 
This is a model equation for Hamilton-Jacobi-Bellman equations arising in stochastic singular control.  We establish the existence of a unique viscosity solution of the Dirichlet problem that has a H\"{o}lder continuous gradient.  We also show that if $H$ is {\it uniformly} convex, the gradient of this solution is Lipschitz continuous.
\end{abstract}

\tableofcontents

\section{Introduction}
In this paper, we consider PDE associated with a general class of stochastic singular
control problems. This is a class of nonlinear, second-order PDE that each have a free boundary determined by a convex gradient constraint. 
Using PDE methods, we show that the Dirichlet problem has a unique solution and derive some regularity properties of the gradient of this solution. 
Namely, we establish that the gradient is H\"{o}lder continuous and also that if $H$ is uniformly convex, the gradient is Lipschitz continuous. Finally, 
we give a brief discussion of how this type of equation arises in control theory and show how our regularity results apply to the motivating control problems.

\par  The PDE we focus on is

\begin{equation}\label{mainPDE}
\begin{cases}
\max\{Lu-f,H(Du)\}=0, \; & x\in O\\
\hspace{1.46in} u=0, \; & x\in \partial O
\end{cases},
\end{equation}
where $O\subset \R^n$ is open and bounded with smooth boundary $\partial O$ and $f$ is a smooth, non-negative function on $\overline{O}. $ We assume that $L$ is the linear differential operator

\begin{equation}
L\psi(x):= - a(x)\cdot D^2\psi + b(x)\cdot D\psi + c(x)\psi,\quad \psi\in C^2(\overline{O})\nonumber
\end{equation}
with smooth coefficients $a: \overline{O}\rightarrow {\mathcal S}(n),$ $b:\overline{O}\rightarrow \R^n$ and  $c:\overline{O}\rightarrow \R$. Here $A\cdot B:=\tr A^t B$ and ${\mathcal S}(n)$ denotes the set of $n\times n$ symmetric matrices with real entries.  We shall further assume that $L$ is {\it (uniformly) elliptic}: 

\begin{equation}\label{aElliptic}
a(x)\xi\cdot \xi \ge \gamma |\xi|^2, \quad\text{for all} \quad x\in \overline{O}, \xi\in\R^n 
\end{equation}
for some $\gamma>0.$  The final assumption on $L$ that we will make is 

\begin{equation}
c(x)\ge \delta, \quad x\in\overline{O} \nonumber
\end{equation}
where $\delta$ is a positive constant.

\par Our central result is 
\begin{thm}\label{mainTheorem}
$(i)$ Assume that $H:\R^n\rightarrow \R$ satisfies 
 
\begin{equation}
\begin{cases}\label{Hcond}
\text{$H$ is convex}\\
H(0)<0 
\end{cases}.
\end{equation}
Then there is a unique viscosity solution 
$$
u\in C^{1,\alpha}_{\text{loc}}(O)\cap C^{0,1}(O)
$$
of \eqref{mainPDE}, for any $\alpha\in (0,1)$.   
\\ $(ii)$ If, in addition to \eqref{Hcond}, $H$ satisfies
\begin{equation}
\begin{cases}\label{Hcond2}
\text{$H$ is  uniformly convex}\\
D^2H\in L^\infty_{\text{loc}}(\R^n) 
\end{cases}
\end{equation}
then $u\in C^{1,1}_{\text{loc}}(O)$.
\end{thm}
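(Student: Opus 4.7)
The plan is to construct $u$ as a local uniform limit of classical solutions to a penalized problem in which the gradient constraint is replaced by a smooth convex penalty. Fix $\beta_\epsilon \in C^2(\R)$ nondecreasing and convex with $\beta_\epsilon \equiv 0$ on $(-\infty,-\epsilon]$ and $\beta_\epsilon$ growing rapidly on $(0,\infty)$ (for instance $\beta_\epsilon(t)=\epsilon^{-1}(t+\epsilon)_+^{3}$), and consider
\[
Lu^\epsilon - f + \beta_\epsilon(H(Du^\epsilon))=0 \text{ in } O,\qquad u^\epsilon=0 \text{ on } \partial O.
\]
Since $L$ is uniformly elliptic with $c\ge\delta>0$ and the nonlinearity is smooth in $Du^\epsilon$, once uniform $W^{1,\infty}$ bounds on $u^\epsilon$ are in hand, standard quasilinear theory (Gilbarg--Trudinger, Ch.~15) produces classical solutions $u^\epsilon\in C^{2,\alpha}(\overline O)$. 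Uniform bounds come from comparison with the solution $v$ of $Lv=f$, $v|_{\partial O}=0$, giving $0\le u^\epsilon\le v$, together with a first-derivative Bernstein computation on $|Du^\epsilon|^2$; the latter closes because $\beta_\epsilon'\ge 0$, $H(0)<0$, and convexity of $H$ keep the penalty from driving the gradient outside a bounded set, while boundary gradient bounds come from barriers built out of $v$ and the distance to $\partial O$.

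Passing to the limit via Arzel\`a--Ascoli, a subsequence $u^\epsilon\to u$ locally uniformly. Viscosity stability identifies $u$ as a solution of \eqref{mainPDE}: $Lu-f\le 0$ passes directly since $\beta_\epsilon\ge 0$, and boundedness of $\beta_\epsilon(H(Du^\epsilon))$ combined with the rapid growth of $\beta_\epsilon$ on $\{t>0\}$ forces $H(Du)\le 0$; equality of the maximum to $0$ is automatic. Uniqueness is the standard comparison principle for proper second-order viscosity solutions, applicable thanks to $c\ge\delta>0$. For the $C^{1,\alpha}_{\mathrm{loc}}$ estimate in (i), note $u$ solves the linear equation $Lu=f$ in the viscosity sense on the open non-contact set $\{H(Du)<0\}$, and is therefore $C^{2,\alpha}$ there by Schauder theory; the improvement across the free boundary is obtained by extracting uniform interior $C^{1,\alpha}$ bounds on the $u^\epsilon$, for example by differentiating the penalized equation in a tangent direction and applying Krylov--Safonov / De Giorgi--Nash--Moser theory to the resulting linear equation for $D_\xi u^\epsilon$.

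For part~(ii), under uniform convexity of $H$ with $D^2H\in L^\infty_{\mathrm{loc}}$, I would execute a second-derivative Bernstein argument at the penalized level, applying the maximum principle to
\[
w^\epsilon := \eta^2\, D_{\xi\xi}u^\epsilon + \mu|Du^\epsilon|^2
\]
for a cutoff $\eta$, large $\mu$, and unit $\xi$. The penalty contributes the non-negative terms $\beta_\epsilon''(H(Du^\epsilon))\bigl(DH(Du^\epsilon)\cdot D(D_\xi u^\epsilon)\bigr)^2$ and $\beta_\epsilon'(H(Du^\epsilon))\,D^2H(Du^\epsilon)\bigl(D(D_\xi u^\epsilon),D(D_\xi u^\epsilon)\bigr)$; the lower bound $D^2H\ge\kappa I$ ensures the latter dominates the cross terms produced by $\beta_\epsilon'(H(Du^\epsilon))$ paired against $D^3u^\epsilon$ in the Bernstein identity, yielding a uniform $L^\infty$ bound on $D^2u^\epsilon$ on compact subsets and hence $u\in C^{1,1}_{\mathrm{loc}}(O)$. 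The main obstacle is the $C^{1,\alpha}_{\mathrm{loc}}$ estimate of part~(i): without uniform convexity of $H$ the Bernstein calculation fails to close, and the regularity must instead be extracted from the obstacle-type structure ($Lu\le f$ globally, $Lu=f$ off the contact set) combined with the merely convex --- and possibly degenerate --- gradient constraint. Controlling the interaction between the free boundary $\partial\{x:H(Du(x))=0\}$ and the flat directions of the sublevel set $\{H\le 0\}$, where no curvature information is available, is where essentially all of the genuine PDE work lies.
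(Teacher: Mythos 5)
Your overall architecture (penalize, Bernstein estimates, Arzel\`a--Ascoli, viscosity stability, comparison for uniqueness) matches the paper's, and your second-derivative Bernstein computation for part~(ii) is essentially the Wiegner-style argument the paper executes (the paper's test function is $\tfrac12\eta^2|D^2u^\epsilon|^2+\lambda\eta\,\beta_\epsilon(H(Du^\epsilon))+\tfrac\mu2|Du^\epsilon|^2$, with the extra $\beta$-term being important in absorbing cross terms). The uniqueness step is also not quite ``standard'' --- one has to exploit that if $H(p)\le 0$ then $H(\epsilon p)\le \epsilon H(p)+(1-\epsilon)H(0)<0$ for $\epsilon\in(0,1)$ in the doubling argument, so that at the touching point the supersolution must satisfy $Lv\ge f$ --- but that is a refinement rather than a gap.

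The genuine gap is exactly where you flag it: you have no mechanism to produce the interior $C^{1,\alpha}$ estimate of part~(i) when $H$ is merely convex. Your two suggestions both fail: Schauder on the non-contact set says nothing across the free boundary, and differentiating the penalized equation in a direction $\xi$ yields a linear equation for $D_\xi u^\epsilon$ whose first-order coefficient $\beta_\epsilon'(H(Du^\epsilon))DH(Du^\epsilon)$ is \emph{not} bounded uniformly in $\epsilon$, so Krylov--Safonov does not apply as stated. The paper's resolution is different and you should note it: it does not attempt to prove anything at the degenerate level. Instead it replaces $H$ by
\[
H^{t,\rho,\theta}(p):=\theta|p|^2+\bigl(\eta^\rho * H^t\bigr)(p),
\]
where $H^t$ is the inf-convolution \eqref{InfSupH} and $\eta^\rho$ a mollifier, so that $H^{t,\rho,\theta}$ is smooth, has $D^2H^{t,\rho,\theta}\ge\theta I$ and $D^2H^{t,\rho,\theta}$ bounded. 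For such $H$ the penalized problem admits a uniform \emph{local $L^\infty$ bound on $\beta_\epsilon(H(Du^\epsilon))$} (the paper's Lemma~\ref{betaLem}), hence uniform $W^{2,p}_{\mathrm{loc}}$ bounds by Calder\'on--Zygmund, hence uniform $C^{1,\alpha}_{\mathrm{loc}}$ bounds; the decisive point --- the one your approach is missing --- is that the constant in this $\beta$-bound, and therefore the $C^{1,\alpha}$ constant, does \emph{not} depend on the convexity modulus $\theta$, nor on $t,\rho$. One can therefore send $\epsilon\to 0$ first, then $t,\rho,\theta\to 0$, and by viscosity stability and Lemma~\ref{propHt}~(ii) the limit solves \eqref{mainPDE} with the original $H$ and retains the $C^{1,\alpha}_{\mathrm{loc}}$ bound. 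For part~(ii), the $W^{2,\infty}$ constant \emph{does} depend on $\theta$ and on $\|D^2H\|_{L^\infty}$ locally, which is exactly why the hypothesis \eqref{Hcond2} is imposed: parts~(iv)--(v) of Lemma~\ref{propHt} show the inf-convolution/mollification preserves both the lower bound $D^2H\ge\theta'$ (up to a factor) and a local upper bound on $D^2H$ uniformly in $t,\rho,\theta$, so the $C^{1,1}$ estimate also passes to the limit.

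A small secondary remark: your choice $\beta_\epsilon(t)=\epsilon^{-1}(t+\epsilon)_+^3$ does not vanish on $(-\infty,0]$, only on $(-\infty,-\epsilon]$, whereas the paper requires $\beta_\epsilon\equiv 0$ for $z\le 0$ (see~\eqref{betaProp}); this normalization is used when proving $\underline u\equiv 0$ is a subsolution of the penalized equation. It is a cosmetic issue, but worth matching.
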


\par We employ techniques from the theory of viscosity solutions of scalar non-linear elliptic PDE to prove the existence and uniqueness of solutions of \eqref{mainPDE}.  We also use a penalization 
technique similar to the one introduced by L.C. Evans in \cite{E} and refined by M. Wiegner \cite{W} and H. Ishii et. al. \cite{I} to establish regularity of solutions.  More precisely, we study the {\it penalized} equation
\begin{equation}\label{penalizedPDE}
\begin{cases}
Lu^\epsilon + \beta_\epsilon(H(Du^\epsilon))=f, & x\in O\\
\hspace{1.15in}u^\epsilon = 0, & x\in \partial O
\end{cases},
\end{equation}
where $(\beta_\epsilon)_{\epsilon>0}$ is what we call a {\it penalty} function.  $\beta_\epsilon$ can be thought of as a smoothing of $z\mapsto (z/\epsilon)^+$. We show that this equation has a unique solution $u^\epsilon$ that is bounded in $W^{2,p}_{\text{loc}}(O)$ for all $\epsilon$ positive and small enough and that $u^\epsilon$ converges to $u$ in $C^1_{\text{loc}}(O)$, though a sequence of $\epsilon$ tending to $0$.

\par Our result is novel in the fact that it only assumes {\it convexity} of the gradient constraint function. Some previous regularity results apply to the case of the gradient constraint function
\begin{equation}\label{EvansGradCon}
H(p,x)=|p|-g(x), \quad (p,x)\in \R^n\times \overline{O},
\end{equation}
where $g$ is a smooth, positive function on ${O}$.  While we do not consider gradient constraints that depend on $x$, a close inspection of the methods we employ indicate
they would apply to a large class of these gradient constraints.  For instance, if we assume that $H\in C(\R^n\times\overline{O})$ satisfies
$$
\begin{cases}
p\mapsto H(p,x)\;\; \text{is convex for each $x\in \overline{O}$}\\
H(0,x)<0 \; \; \text{for each $x\in \overline{O}$}\\
x\mapsto H(p,x) \in C^{0,1}(\overline{O})
\end{cases},
$$
then a modification of our methods would establish part $(i)$ of Theorem \ref{mainTheorem} for solutions of equation \eqref{mainPDE} with this $H$.  

\par Our goal was to identify general structural conditions 
on the type of gradient constraints for which penalization methods are successful at yielding regularity results.  We believe that assumptions \eqref{Hcond} and \eqref{Hcond2} accomplish this. In addition, we remark that our main regularity result involves uniform convexity of $H$ while previous results did not explicitly make this assumption.  This is because uniform convexity was built in by their choices of gradient constraints. For example, we can replace the gradient constraint given by $H$ defined in \eqref{EvansGradCon} with $|Du|^2-g(x)^2\le 0$, which is uniformly convex in $Du$. 

\par It also should be noted that $C^2$ regularity of solutions
has been obtained in the case $O=\R^n$ and $H(p)=|p|-1$ by assuming $n=1$ or $2$,  very special structural conditions on $f$ and/or that $L$ has constant coefficients  \cite{SS, SS1}.  The purpose
of this work was to study solutions of \eqref{mainPDE} on a general bounded domain in $\R^n$ and estimate solutions for a {\it general} gradient constraint function and a general elliptic operator.

 \par The organization of this paper is as follows. In section \ref{compprin}, we show that \eqref{mainPDE} has a unique viscosity solution by establishing a comparison principle for sub and supersolutions. Then we pursue the regularity of this solution in section \ref{regularity}
 by studying the penalized equation \eqref{penalizedPDE}.  In the final section, we use our uniform estimates to pass to the limit as $\epsilon\rightarrow 0^+$ and prove Theorem \ref{mainTheorem}.  Before performing our analysis, let us discuss the motivating applications in singular control theory and give a probabilistic interpretation of solutions of \eqref{mainPDE}.

\par {\bf Probabilistic interpretation of solutions.} Assume that $(\Omega, {\mathcal F},\mathbb{P})$ is a probability space equipped with a standard $n$-dimensional Brownian motion $(W(t),t\ge 0)$. A {\it control process} is a pair $(\rho, \xi)$ such that 
$$
\begin{cases}
(\rho(t),\xi(t))\in \R^n\times \R, \; t\ge 0, \\ 
(\rho,\xi)\;\text{is adapted to the filtration generated by $W$}\\
|\rho(t)|=1, t \ge 0, \text{a.s.} \\
\xi(0)=0, \; \text{$t\mapsto \xi(t)$ is non-decreasing and is left continuous with right hand limits a.s.}
\end{cases}.
$$
Now, let $\ell$ be the support function of a nonempty, closed, convex set $K\subset \R^n$. That is, 
\begin{equation}\label{ellapp}
\ell(v):=\sup_{p\in K}v\cdot p, \quad v\in \R^n.
\end{equation}
We consider the stochastic control problem  
\begin{equation}\label{valuefun}
u(x):=\inf_{\rho, \xi}\E^x\int^{\tau}_{0}e^{-\int^{t}_{0}c(X^{\rho,\xi}(s))ds}\left[f(X^{\rho,\xi}(t))dt+\ell(\rho(t))d\xi(t)\right],\quad x\in\overline{O}.
\end{equation}
Here $X^{\rho,\xi}$ satisfies the stochastic differential equation (SDE)

\begin{equation}
\begin{cases}
dX(t)= - b(X(t))dt+\sigma(X(t))dW(t) - \rho(t)d\xi(t), \; t\ge 0\\
\;X(0)=x
\end{cases}\nonumber
\end{equation}
and $\tau=\inf\{t\ge 0: X^{\rho,\xi}(t)\notin O\}$. We are assuming that $\sigma,b,c$ are smooth on $\overline{O}$ and that the above SDE has 
as unique solution (in law) for each $x\in\overline{O}$ and control process $(\rho,\xi)$.  In general,  $X$ will not have
continuous sample paths and so it is regarded as a ``singularly" controlled process. Therefore, we say that $u$ is the value function of a
problem of stochastic singular control.

\par W. Fleming and H. Soner have shown that if the value function $u$ satisfies a natural dynamic programming principle, then $u$ is a viscosity solution of a 
Hamilton-Jacobi-Bellman (HJB) equation of the 
form \eqref{mainPDE} (Theorem 5.1, section VIII.5 in \cite{FS}). This result, restated below, provides the connection between equation \eqref{mainPDE} and stochastic singular control. 

\begin{thm}\label{SonerThm}
Assume that for each stopping time $\theta$ (with respect to the filtration generated by $W$) and $x\in \overline{O}, $ 
\begin{eqnarray}
u(x)&=&\inf_{\rho, \xi}\E^x\left\{e^{-\int^{\tau\wedge\theta}_{0}c(X^{\rho,\xi}(s))ds}u(X^{\rho,\xi}(\tau\wedge\theta)) \right. \nonumber \\
 &&\hspace{1in}\left. + \int^{\tau\wedge\theta}_{0}e^{-\int^{t}_{0}c(X^{\rho,\xi}(s))ds}\left[f(X^{\rho,\xi}(t))dt+\ell(\rho(t))d\xi(t)\right]\right\}.\nonumber
\end{eqnarray}
Then the value function $u$ is a viscosity solution of HJB equation
\begin{equation}\label{HJBeq}
\begin{cases}
\max\left\{-\frac{1}{2}\sigma(x)\sigma^t(x)\cdot D^2u + b(x)\cdot Du + c(x)u - f(x), H(Du)\right\}=0, \; x\in O\\
\hspace{4.28in} u=0, \; x\in \partial O
\end{cases}
\end{equation}
where 
\begin{equation}\label{Happ}
H(p)=\max_{|v|=1}\left\{p\cdot v - \ell(v)\right\}, \quad p\in \R^n.
\end{equation}
\end{thm}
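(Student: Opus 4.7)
The plan is to prove the subsolution and supersolution properties of $u$ separately, using the dynamic programming principle (DPP) hypothesis together with the support-function representations \eqref{ellapp} and \eqref{Happ}. In both halves the main tool is It\^o's formula for semimartingales with jumps, applied to the discounted test function $e^{-\int_0^t c(X(s))\,ds}\phi(X(t))$, where $\phi\in C^2$ is the test function at hand.

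\textbf{Subsolution.} Suppose $u-\phi$ attains a local maximum at $x_0\in O$ with $u(x_0)=\phi(x_0)$. To obtain $H(D\phi(x_0))\le 0$, fix a unit vector $v$ and apply the DPP with the pure-jump control $\rho\equiv v$, $\xi(0)=0$, $\xi(t)=\epsilon$ for $t>0$ (or the smooth regularization $\xi(t)=\epsilon\min(t/\epsilon',1)$ with $\epsilon'\to 0^+$); the jump instantaneously sends $X$ from $x_0$ to $x_0-\epsilon v$ at cost $\epsilon\,\ell(v)$, so taking $\theta\to 0^+$ gives $u(x_0)\le u(x_0-\epsilon v)+\epsilon\,\ell(v)$. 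Using $u\le\phi$ with equality at $x_0$, dividing by $\epsilon$, and letting $\epsilon\to 0^+$ yields $D\phi(x_0)\cdot v\le\ell(v)$; take the supremum over $|v|=1$. For $L\phi(x_0)-f(x_0)\le 0$, take $\xi\equiv 0$ and $\theta=h$, and apply It\^o to $e^{-\int c}\phi(X)$; after taking expectations in the DPP and using $u\le\phi$ near $x_0$,
\[ 0\le \E^{x_0}\int_0^{h\wedge\tau}e^{-\int_0^t c\,ds}\bigl[f(X(t))-L\phi(X(t))\bigr]dt, \]
which upon division by $h$ and passage $h\to 0^+$ gives the desired inequality.

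\textbf{Supersolution.} Suppose $u-\phi$ attains a local minimum at $x_0$ with $u(x_0)=\phi(x_0)$ and, for contradiction, that $\max\{L\phi-f,H(D\phi)\}(x_0)<0$. By continuity both summands are $\le -\delta$ on some ball $B_r(x_0)\subset O$; via \eqref{Happ} the second reads $\ell(v)-D\phi(x)\cdot v\ge\delta$ for all $x\in B_r(x_0)$ and $|v|=1$. For an arbitrary control $(\rho,\xi)$, let $\theta$ be the exit time of $X^{\rho,\xi}$ from $B_r(x_0)$ and apply the general semimartingale change-of-variables formula to $e^{-\int c}\phi(X)$ on $[0,\theta\wedge\tau]$: the continuous part is bounded by $(-L\phi)\,dt\le (f-\delta-c\phi)dt$, and the jump/singular part, after writing $\phi(X_{t-}-\rho\Delta\xi)-\phi(X_{t-})=-\Delta\xi\int_0^1 D\phi(X_{t-}-r\rho\Delta\xi)\cdot\rho\,dr$, is bounded by $(\ell(\rho)-\delta)\,d\xi$. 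Taking expectations and rearranging,
\[ \phi(x_0)+\delta\,\E^{x_0}\int_0^{\theta\wedge\tau}e^{-\int c}(dt+d\xi)\le \E^{x_0}\Bigl[e^{-\int_0^{\theta\wedge\tau}c\,ds}\phi(X(\theta\wedge\tau))+\int_0^{\theta\wedge\tau}e^{-\int c}\bigl(f\,dt+\ell(\rho)d\xi\bigr)\Bigr]. \]
Uniform ellipticity gives $\E^{x_0}[\theta\wedge\tau]\ge c_0>0$ independently of the control. Replacing $\phi(X(\theta\wedge\tau))$ by the larger quantity $u(X(\theta\wedge\tau))$ and taking the infimum over controls, the right side becomes $u(x_0)$ by the DPP, contradicting $\phi(x_0)=u(x_0)$.

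\textbf{Main obstacle.} The technical heart is the rigorous version of It\^o's formula in the presence of the singular control $\xi$: one must apply the change-of-variables formula for BV-driven semimartingales, carefully separating the continuous singular part of $\xi$ from its atoms, and convert the jump increments of $\phi(X)$ into the integrals of $D\phi\cdot\rho$ shown above so that the single inequality $\ell(\rho)-D\phi(x)\cdot\rho\ge\delta$ controls both contributions. A secondary subtlety is rigorously justifying the ``pure jump at $t=0^+$'' control in the subsolution argument, which requires approximating by smooth controls, invoking the DPP for each, and passing to the limit.
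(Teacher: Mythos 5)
The paper does not actually prove this theorem: it is quoted verbatim from Fleming--Soner (Theorem 5.1, Section VIII.5 of \cite{FS}) and used as a black box to connect \eqref{mainPDE} with the control problem, so there is no internal proof to compare against. Your outline is the standard verification argument from that source (specific controls for the subsolution inequalities, a contradiction via It\^o's formula and the dynamic programming principle for the supersolution inequality), and its overall architecture is sound.

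There is, however, one genuinely false step in the supersolution half: the claim that ``uniform ellipticity gives $\E^{x_0}[\theta\wedge\tau]\ge c_0>0$ independently of the control.'' A control that jumps by $\ge r$ at time $0^+$ exits $B_r(x_0)$ instantly, so no uniform lower bound on the expected exit time can hold. What the argument actually requires --- and what your displayed inequality fortunately already isolates --- is a uniform positive lower bound on $\E^{x_0}\int_0^{\theta\wedge\tau}e^{-\int c}\,(dt+d\xi)$: if $\theta\wedge\tau$ is small, the drift and martingale parts cannot have displaced $X$ by order $r$, so the singular term $\int\rho\,d\xi$ must have, forcing $\xi(\theta\wedge\tau)\gtrsim r$ with probability bounded away from zero. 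This dichotomy (either time elapses or fuel is spent) is a lemma in \cite{FS} and must replace the exit-time claim. Two smaller points: the prose inequality ``$(-L\phi)\,dt\le(f-\delta-c\phi)\,dt$'' has the direction reversed and double-counts $c\phi$ (the hypothesis $L\phi-f\le-\delta$ gives the \emph{lower} bound $-L\phi\ge\delta-f$, which is the one your final display uses, and the $-c\phi$ term is already part of $-L\phi$); and when converting the jump increments of $\phi(X)$ you need the entire segment from $X_{t-}$ to $X_{t-}-\rho\Delta\xi$ to lie in $B_r(x_0)$, so jumps that cross $\partial B_r(x_0)$ must be truncated at the boundary before applying $\ell(\rho)-D\phi\cdot\rho\ge\delta$.
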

In particular, $u$ is a viscosity solution of \eqref{mainPDE} with 
\begin{equation}\label{aApplication}
a(x):=\frac{1}{2}\sigma(x)\sigma^t(x), \; x\in\overline{O}.
\end{equation}
In view of Theorem \ref{mainTheorem}, we have the following corollary which asserts that singular stochastic control problems as described above have $C^{1}$ value functions.
\begin{cor}\label{AppTheorem}
Assume the hypotheses of Theorem \ref{SonerThm}, that $a:\overline{O}\rightarrow {\cal S}(n)$ defined in \eqref{aApplication} satisfies the uniform ellipticity condition \eqref{aElliptic}, and that $0\in \R^n$ is an interior point of the convex set $K$ associated with $\ell$. \\
(i) Then $u$ given by \eqref{valuefun} is the unique viscosity solution of \eqref{HJBeq}. In particular, $u\in C^{1,\alpha}_{\text{loc}}(O)$ for each $0<\alpha<1$.
\\ (ii) Further suppose $\ell$ is the support function of $K:=\{p\in \R^n: G(p)\le 0\}$ where  $G$ satisfies \eqref{Hcond} and \eqref{Hcond2}. Then $u\in C^{1,1}_{\text{loc}}(O)$.
\end{cor}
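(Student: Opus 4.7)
The plan is to reduce the corollary to Theorem~\ref{mainTheorem} applied to the HJB equation~\eqref{HJBeq}. Since \eqref{HJBeq} is exactly~\eqref{mainPDE} with $a$ given by~\eqref{aApplication}, uniform ellipticity is already part of the hypotheses, and the only non-trivial setup is to verify that $H$ defined by~\eqref{Happ} satisfies~\eqref{Hcond}. Convexity is immediate because $H$ is a supremum of affine functions of $p$. For $H(0)<0$ I would use the hypothesis $0\in\text{int}(K)$: choosing $r>0$ with $B_r(0)\subset K$ gives $\ell(v)=\sup_{p\in K}v\cdot p\ge r$ for every unit vector $v$, so $H(0)=-\min_{|v|=1}\ell(v)\le -r<0$.

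Granted this, part~(i) follows quickly. Theorem~\ref{mainTheorem}(i) produces a unique viscosity solution $\tilde u\in C^{1,\alpha}_{\text{loc}}(O)\cap C^{0,1}(O)$ of~\eqref{HJBeq} for every $\alpha\in(0,1)$. Theorem~\ref{SonerThm} asserts that the value function $u$ is itself a viscosity solution of the same PDE, so uniqueness identifies $u$ with $\tilde u$ and transfers the regularity.

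For part~(ii) the difficulty is that the $H$ coming from~\eqref{Happ} is typically not uniformly convex, so Theorem~\ref{mainTheorem}(ii) cannot be invoked directly with this $H$. The plan is to replace $H$ by $G$ inside the PDE and show that the two equations have the same viscosity solutions. The bipolar relation for support functions yields $\{H\le 0\}=K=\{G\le 0\}$. To transfer this identification to the supersolution inequality I also need $\{H<0\}=\{G<0\}$, which I would deduce from the following lemma: for any convex continuous $F:\R^n\to\R$ with $F(0)<0$, $\text{int}\{F\le 0\}=\{F<0\}$. Indeed, if $F(p)=0$ then convexity along the ray from $0$ gives $F(tp)\ge tF(p)-(t-1)F(0)=(t-1)|F(0)|>0$ for every $t>1$, so $p$ cannot be interior to $\{F\le 0\}$. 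With both the closed and open sub-level sets of $H$ and $G$ identified, the subsolution condition $\max\{L\phi-f,H(D\phi)\}\le 0$ becomes $\max\{L\phi-f,G(D\phi)\}\le 0$, and similarly for the supersolution condition. Hence $u$ is a viscosity solution of~\eqref{mainPDE} with $G$ in place of $H$; since $G$ satisfies~\eqref{Hcond} and~\eqref{Hcond2}, Theorem~\ref{mainTheorem}(ii) then delivers $u\in C^{1,1}_{\text{loc}}(O)$.

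The main obstacle is precisely this change of gradient constraint function from $H$ to $G$: the viscosity formulation depends on the specific function, not merely on its zero sub-level set, so the convex-function lemma that upgrades $\{H\le 0\}=\{G\le 0\}$ to $\{H<0\}=\{G<0\}$ is the key technical point. Once that is in place the result is a clean corollary of Theorem~\ref{mainTheorem}.
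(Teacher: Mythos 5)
Your argument follows the same route as the paper: in part (i) you verify $H$ satisfies \eqref{Hcond} by noting $H$ is a supremum of affine functions and that $0\in\text{int}\,K$ forces $\ell\ge r$ on the unit sphere, hence $H(0)\le -r<0$, and then combine Theorem \ref{mainTheorem}(i) with Theorem \ref{SonerThm}; in part (ii) you identify $\{H\le 0\}=K=\{G\le 0\}$ via the support-function representation and then replace $H$ by $G$ in the PDE before invoking Theorem \ref{mainTheorem}(ii). The one place you go beyond the paper is in justifying that replacement. The paper asserts directly that $u$ solves the equation with $G$ once the closed zero sublevel sets agree, but as you observe the viscosity supersolution inequality $\max\{L\psi-f,\,H(D\psi)\}\ge 0$ is sensitive to the \emph{strict} sublevel set $\{H<0\}$, not merely $\{H\le 0\}$. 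Your auxiliary lemma — for a finite convex $F$ with $F(0)<0$ one has $\mathrm{int}\{F\le 0\}=\{F<0\}$, proved by the ray argument $F(tp)\ge(t-1)|F(0)|>0$ for $t>1$ whenever $F(p)=0$ — applies to both $H$ and $G$ (both are convex, finite, and negative at the origin) and yields $\{H<0\}=\{G<0\}$, so both the subsolution and supersolution conditions are preserved under the swap. This makes explicit a step the paper leaves implicit; the rest of the argument is correct and matches the paper's.
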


\begin{proof} $(i)$ If $0$ lies in the interior of $K$,  $B_\delta(0)\subset K$ for some $\delta>0.$ It is immediate from \eqref{ellapp} that $\ell\ge \delta$ and in particular that $H(0)\le-\delta <0$.  The conclusion follows at once from Theorem \ref{mainTheorem}. 

\par $(ii)$ A standard fact about the support function $\ell$ of a closed, convex subset $K\subset \R^n$ is that
$$
K= \bigcap_{|v|=1}\left\{p\in \R^n: v\cdot p\le \ell(v)\right\}.
$$
(Theorem 8.24 in \cite{Rock}).  Therefore, we have by hypothesis
$$
\{p\in \R^n: G(p)\le 0\} = \bigcap_{|v|=1}\left\{p\in \R^n: v\cdot p\le \ell(v)\right\}.
$$
In view of \eqref{Happ}, it is also plain that
$$
\{p\in \R^n: H(p)\le 0\} = \bigcap_{|v|=1}\left\{p\in \R^n: v\cdot p\le \ell(v)\right\}.
$$
Consequently, $u$ solves the PDE
$$
\max\left\{-\frac{1}{2}\sigma(x)\sigma^t(x)\cdot D^2u + b(x)\cdot Du + c(x)u - f(x), G(Du)\right\}=0, \; x\in O.
$$
From the assumptions made on $G$, it follows from Theorem \ref{mainTheorem} that $u\in C^{1,1}_{\text{loc}}(O).$
\end{proof}

\section{Comparison principle}\label{compprin}
In this section, we will verify a fundamental comparison principle among viscosity sub- and supersolutions of the PDE
$$
\max\{Lu-f,H(Du)\}=0.
$$
Although it is now well known how to establish comparison principles of elliptic nonlinear PDE, the problem we have must 
be individually addressed because of the gradient constraint.  In what follows, we use the notation and several basic
results found in standard sources on viscosity solutions such as \cite{C,CIL} and the references therein.  Throughout this paper, 
all PDE and partial differential inequalities will be interpreted in the viscosity sense. Therefore, we may sometimes omit the term
``viscosity" when we mention solutions, subsolutions, and supersolutions.


\begin{prop}\label{comparisonProp}
Assume $u$ is a subsolution of \eqref{mainPDE} and $v$ is a supersolution of \eqref{mainPDE}. If 
$$
u\le v \; \text{on $\partial O$}\quad \text{and} \quad u\in L^\infty(\partial O),
$$
then $u\le v$ in $\overline{O}$. 
\end{prop}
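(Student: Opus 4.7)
The plan is a standard Crandall--Ishii doubling-of-variables argument, with the gradient constraint playing the role of a selection mechanism on the supersolution inequality. Suppose for contradiction that $M := \sup_{\overline O}(u-v) > 0$; since $u\le v$ on $\partial O$, upper/lower semicontinuity of $u$ and $v$ forces the supremum to be attained at some interior point $\hat x\in O$. I would then introduce the usual penalization
$$\Phi_\epsilon(x,y)\;=\;u(x)-v(y)-\tfrac{1}{2\epsilon}|x-y|^2,\qquad (x,y)\in\overline O\times\overline O,$$
and let $(x_\epsilon,y_\epsilon)$ be a maximizer. Standard facts (see the User's Guide \cite{CIL}) give $x_\epsilon,y_\epsilon\to\hat x$, $u(x_\epsilon)\to u(\hat x)$, $v(y_\epsilon)\to v(\hat x)$, $(x_\epsilon,y_\epsilon)\in O\times O$ for small $\epsilon$, and crucially $|x_\epsilon-y_\epsilon|^2/\epsilon\to 0$.

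Applying the Crandall--Ishii lemma at $(x_\epsilon,y_\epsilon)$ produces $p_\epsilon := (x_\epsilon-y_\epsilon)/\epsilon$ and matrices $X_\epsilon,Y_\epsilon\in\mathcal S(n)$ with $(p_\epsilon,X_\epsilon)\in\overline{J}^{2,+}u(x_\epsilon)$, $(p_\epsilon,Y_\epsilon)\in\overline{J}^{2,-}v(y_\epsilon)$, and
$$\begin{pmatrix}X_\epsilon & 0\\ 0 & -Y_\epsilon\end{pmatrix}\;\le\;\frac{3}{\epsilon}\begin{pmatrix}I & -I\\ -I & I\end{pmatrix}.$$
The structural use of the gradient constraint is now the following. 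The subsolution property of $u$ gives both $H(p_\epsilon)\le 0$ \emph{and} $-a(x_\epsilon)\cdot X_\epsilon+b(x_\epsilon)\cdot p_\epsilon+c(x_\epsilon)u(x_\epsilon)-f(x_\epsilon)\le 0$. Since $H(p_\epsilon)\le 0$, the supersolution inequality $\max\{\,\cdot\,,H(p_\epsilon)\}\ge 0$ forces the linear branch to be selected:
$$-a(y_\epsilon)\cdot Y_\epsilon+b(y_\epsilon)\cdot p_\epsilon+c(y_\epsilon)v(y_\epsilon)-f(y_\epsilon)\;\ge\;0.$$

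Subtracting the two scalar inequalities yields
$$c(x_\epsilon)u(x_\epsilon)-c(y_\epsilon)v(y_\epsilon)\;\le\;\bigl[a(x_\epsilon)\cdot X_\epsilon-a(y_\epsilon)\cdot Y_\epsilon\bigr]+(b(y_\epsilon)-b(x_\epsilon))\cdot p_\epsilon+(f(x_\epsilon)-f(y_\epsilon)).$$
The $b$-term is bounded by $\|Db\|_\infty|x_\epsilon-y_\epsilon|^2/\epsilon\to 0$ and the $f$-term is $o(1)$ by smoothness. For the matrix bracket, I would write $a=\tfrac12\sigma\sigma^t$ with $\sigma$ smooth (available by uniform ellipticity and smoothness of $a$) and test the matrix inequality above against $(\sigma(x_\epsilon),\sigma(y_\epsilon))$, obtaining $a(x_\epsilon)\cdot X_\epsilon-a(y_\epsilon)\cdot Y_\epsilon\le C\|\sigma(x_\epsilon)-\sigma(y_\epsilon)\|^2/\epsilon\le C|x_\epsilon-y_\epsilon|^2/\epsilon\to 0$. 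Passing to the limit using $x_\epsilon,y_\epsilon\to\hat x$ and the semicontinuity of $u,v$ gives $c(\hat x)(u(\hat x)-v(\hat x))\le 0$, which contradicts $c\ge\delta>0$ and $u(\hat x)-v(\hat x)=M>0$.

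The structurally novel step is isolating the linear branch of the supersolution inequality via $H(p_\epsilon)\le 0$; once that reduction is made, the remainder is the well-known second-order comparison machinery. The technical obstacle is the matrix term $a(x_\epsilon)\cdot X_\epsilon-a(y_\epsilon)\cdot Y_\epsilon$, whose control requires a Lipschitz square-root factorization of $a$ and careful use of the Crandall--Ishii matrix inequality; the strict positivity $c\ge\delta$ is essential to extract the contradiction after all error terms are shown to vanish.
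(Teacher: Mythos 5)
Your overall architecture (doubling of variables, Crandall--Ishii lemma, square-root factorization of $a$, and $c\ge\delta$ to close the contradiction) matches the paper's, but there is a genuine gap at the one step you call "structurally novel." From the subsolution property you obtain $H(p_\epsilon)\le 0$, and you then assert that the supersolution inequality $\max\{\,\cdot\,,H(p_\epsilon)\}\ge 0$ "forces the linear branch to be selected." That implication fails precisely when $H(p_\epsilon)=0$: in that case the max is already $\ge 0$ through the constraint branch, and you learn nothing about $-a(y_\epsilon)\cdot Y_\epsilon+b(y_\epsilon)\cdot p_\epsilon+c(y_\epsilon)v(y_\epsilon)-f(y_\epsilon)$. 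Since both jets carry the \emph{same} vector $p_\epsilon=(x_\epsilon-y_\epsilon)/\epsilon$ in your symmetric doubling, you have no way to rule out $H(p_\epsilon)=0$ — and this is exactly the free-boundary case that makes the equation nontrivial. A telltale sign of the gap is that your argument never invokes the standing hypotheses \eqref{Hcond} (convexity of $H$ and $H(0)<0$), which the comparison principle genuinely needs.

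The paper closes this hole by doubling with a scaled subsolution: it maximizes $\epsilon u(x)-v(y)-\tfrac{1}{2\eta}|x-y|^2$ for $\epsilon\in(0,1)$. Then the jet of $u$ carries the vector $\tfrac{x_\eta-y_\eta}{\epsilon\eta}$ while the jet of $v$ carries $\tfrac{x_\eta-y_\eta}{\eta}=\epsilon\cdot\tfrac{x_\eta-y_\eta}{\epsilon\eta}$, and convexity together with $H(0)<0$ gives
$H\bigl(\tfrac{x_\eta-y_\eta}{\eta}\bigr)\le \epsilon H\bigl(\tfrac{x_\eta-y_\eta}{\epsilon\eta}\bigr)+(1-\epsilon)H(0)<0$
\emph{strictly}, which legitimately forces the linear branch of the supersolution inequality. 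The price is an extra boundary/zeroth-order error of size $C(1-\epsilon)$, removed at the end by letting $\epsilon\to 1^-$. To repair your proof you would need to incorporate this (or an equivalent strictness mechanism); as written, the argument does not go through.
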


\noindent {\it Formal Proof}. Before proving the above proposition, we give a formal proof (i.e. assuming $u,v\in C^2(O))$ that will help motivate a rigorous argument. 
Fix $\epsilon \in (0,1)$ and set

$$
w^\epsilon(x)=\epsilon u(x) - v(x), \quad x\in \overline{O}.
$$
$w^\epsilon\in USC(\overline{O})$ and thus achieves its maximum at some $x_\epsilon\in \overline{O}$. If $x_\epsilon \in \partial O$, then 
$$
w^\epsilon(x_\epsilon)=-(1-\epsilon)u(x_\epsilon)+u(x_\epsilon)-v(x_\epsilon)\le -(1-\epsilon)u(x_\epsilon)\le (1-\epsilon)|u|_{L^\infty(\partial O)}.
$$
If $x_\epsilon \in O$, then by calculus 
$$
\begin{cases}
0=Dw^\epsilon(x_\epsilon)=\epsilon Du(x_\epsilon) - Dv(x_\epsilon)\\
0\ge D^2w^\epsilon(x_\epsilon)=\epsilon D^2u(x_\epsilon) - D^2v(x_\epsilon)
\end{cases}.
$$
As $H(Du(x_\epsilon))\le 0$,  
\begin{equation}\label{Dhv0}
H(Dv(x_\epsilon))=H(\epsilon Du(x_\epsilon))\le \epsilon H(Du(x_\epsilon))+(1-\epsilon)H(0)<0
\end{equation}
by \eqref{Hcond}.  In particular, since $v$ is a supersolution, we have that 

$$
Lv(x_\epsilon)-f(x_\epsilon)\ge 0.
$$
Therefore,
\begin{eqnarray}
 c(x_\epsilon)w(x_\epsilon)  &\le & L(\epsilon u - v)(x_\epsilon) \nonumber \\
                                    &\le &  - (1-\epsilon)f(x_\epsilon)\nonumber\\
                                    &\le & 0 \nonumber
\end{eqnarray}
and hence $w^\epsilon(x_\epsilon)\le 0$.  In either case, $w^\epsilon\le C(1-\epsilon)$, and letting $\epsilon\rightarrow 1^-$ gives $u\le v.$ 
\begin{flushright}
$\Box$
\end{flushright}

\begin{proof} (of the proposition)
1. Fix $\epsilon \in (0,1)$ and set 
$$
w^{\eta}(x,y)=\epsilon u(x) - v(y) - \frac{1}{2\eta}|x-y|^2, \quad x,y \in \overline{O}
$$
for $\eta>0.$  $w^\eta\in USC(\overline{O}\times \overline{O})$ and so has a maximum at some point $(x_\eta, y_\eta)\in\overline{O}\times \overline{O}$.  As $\overline{O}$ is compact, 
$(x_\eta, y_\eta)$ has a limit point of the form $(x_\epsilon, x_\epsilon)$ through some sequence of $\eta\rightarrow 0^+$, where $x_\epsilon$ is a maximizing point of $x\mapsto \epsilon u(x)-v(x)$, and through this sequence of $\eta\rightarrow 0^+$
\begin{equation}\label{LimLem}
\frac{|x_\eta-y_\eta|^2}{\eta}\rightarrow 0
\end{equation}
(Lemma 3.1 in \cite{CIL}).  

\par 2. If $x_\epsilon\in \partial O$, we have from the definition of $w^\eta$
and our assumptions that
$$
\epsilon u(x) - v(x)\le \epsilon u(x_\epsilon) - v(x_\epsilon)=-(1-\epsilon)u(x_\epsilon) + u(x_\epsilon)-v(x_\epsilon)\le C(1-\epsilon),\quad x\in \overline{O}.
$$
Now we assume that $x_\epsilon\in O$ and without any loss of generality that $(x_\eta,y_\eta)\in O\times O$ for $\eta>0$.
According to the Theorem of Sums (Theorem 3.2 in \cite{CIL}), for each $\rho >0$ there are $X,Y\in {\mathcal S}(n)$ such that 
$$
\left(\frac{x_\eta - y_\eta}{\eta}, X\right)\in \overline{J}^{2,+}(\epsilon u)(x_\eta)
$$
$$
\left(\frac{x_\eta - y_\eta}{\eta}, Y\right)\in \overline{J}^{2,-}v(y_\eta)
$$
and 
\begin{equation}\label{FirstMatIneq}
\left(\begin{array}{cc}
X & 0 \\
0 & -Y
\end{array}\right)\le A + \rho A^2.
\end{equation}
Here 
$$
A=D^2\frac{|x-y|^2}{2\eta} {\Big |}_{x=x_\eta, y=y_\eta}=\frac{1}{\eta}\left(\begin{array}{cc}
I & -I \\
-I & I
\end{array}\right)
$$
and $I$ is the $n\times n$ identity matrix. In particular, choosing $\rho=\eta$ in \eqref{FirstMatIneq} implies the matrix inequality
\begin{equation}\label{IshiiIneq}
\left(\begin{array}{cc}
X & 0 \\
0 & -Y
\end{array}\right)
\le \frac{3}{\eta}
\left(\begin{array}{cc}
I & -I \\
-I & I
\end{array}\right).
\end{equation}

\par 3. Since $u$ is a viscosity subsolution 
\begin{equation}\label{usubsoln}
\max\left\{ - a(x_\eta)\cdot \frac{X}{\epsilon} +b(x_\eta)\cdot \frac{x_\eta -y_\eta}{\epsilon\eta} +c(x_\eta)u(x_\eta)- f(x_\eta), H\left(\frac{x_\eta-y_\eta}{\epsilon\eta}\right) \right\}\le 0, 
\end{equation}
and since $v$ is a viscosity supersolution 
\begin{equation}\label{vsupsoln}
\max\left\{ - a(y_\eta)\cdot Y +b(x_\eta)\cdot \frac{x_\eta -y_\eta}{\eta}+c(y_\eta)v(y_\eta) - f(y_\eta), H\left(\frac{x_\eta-y_\eta}{\eta}\right) \right\}\ge 0. 
\end{equation}
As $H\left(\frac{x_\eta-y_\eta}{\epsilon\eta}\right)\le 0$, we have by \eqref{Hcond} that 
$$
H\left(\frac{x_\eta-y_\eta}{\eta}\right)=H\left(\epsilon\frac{x_\eta-y_\eta}{\epsilon\eta}\right)\le \epsilon H\left(\frac{x_\eta-y_\eta}{\epsilon\eta}\right) + (1-\epsilon)H(0)<0.
$$
By \eqref{vsupsoln}, 
\begin{equation}\label{vsupsoln2}
a(y_\eta)\cdot Y +b(x_\eta)\cdot \frac{x_\eta -y_\eta}{\eta}+c(y_\eta)v(y_\eta) - f(y_\eta)\ge 0.
\end{equation}
Combining  \eqref{usubsoln} and \eqref{vsupsoln2} gives, 
\begin{eqnarray}\label{HopeFinIneq}
\epsilon c(x_\eta)u(x_\eta)  - c(y_\eta)v(y_\eta) & \le & a(x_\eta)\cdot X - a(y_\eta)\cdot Y + (b(x_\eta)-b(y_\eta))\cdot \frac{x_\eta - y_\eta}{\eta} \nonumber \\
                                                                                                  && + \epsilon f(x_\eta)-f(y_\eta) \nonumber \\
                                                                                                  &\le &  a(x_\eta)\cdot X - a(y_\eta)\cdot Y+ \text{Lip}(b)\frac{|x_\eta - y_\eta|^2}{\eta}+ \text{Lip}(f)|x_\eta-y_\eta|. \nonumber \\
\end{eqnarray}
Note $x\mapsto a^{1/2}(x)$ (the {\it unique} positive square root of $a(x)$) is Lipschitz continuous since $x\mapsto a(x)$ is Lipschitz and $a\ge \gamma>0$; indeed 
$$
\text{Lip}(a^{1/2})\le \frac{\text{Lip}(a)}{2\gamma}. 
$$ Also note that the 
$2n\times 2n $ matrix
\begin{equation}
\left(\begin{array}{cc}
a^{1/2}(x_\eta)a^{1/2}(x_\eta) & a^{1/2}(x_\eta)a^{1/2}(y_\eta)\\
a^{1/2}(y_\eta)a^{1/2}(x_\eta) & a^{1/2}(y_\eta)a^{1/2}(y_\eta)
\end{array}\right) \nonumber
\end{equation}
is non-negative definite, and by \eqref{IshiiIneq}

\begin{eqnarray}
a(x_\eta)\cdot X -a(y_\eta)\cdot Y &=& \tr\left[a(x_\eta)X - a(y_\eta)Y\right] \nonumber \\
&=&\tr\left[ \left(\begin{array}{cc}
a^{1/2}(x_\eta)a^{1/2}(x_\eta) & a^{1/2}(x_\eta)a^{1/2}(y_\eta)\\
a^{1/2}(y_\eta)a^{1/2}(x_\eta) & a^{1/2}(y_\eta)a^{1/2}(y_\eta)
\end{array}\right)\left(\begin{array}{cc}
X & 0 \\
0 & -Y
\end{array}\right)\right] \nonumber \\
&\le & 
\tr\left[ \left(\begin{array}{cc}
a^{1/2}(x_\eta)a^{1/2}(x_\eta) & a^{1/2}(x_\eta)a^{1/2}(y_\eta)\\
a^{1/2}(y_\eta)a^{1/2}(x_\eta) & a^{1/2}(y_\eta)a^{1/2}(y_\eta)
\end{array}\right)
\frac{3}{\eta}
\left(\begin{array}{cc}
I & -I \\
-I & I
\end{array}\right)\right]\nonumber \\
&\le & \frac{3}{\eta}\tr\left[(a^{1/2}(x_\eta)-a^{1/2}(y_\eta))((a^{1/2}(x_\eta)-a^{1/2}(y_\eta))\right] \nonumber \\
&\le & \frac{3\text{Lip}(a)^2}{2\gamma^2}\frac{|x_\eta-y_\eta|^2}{2\eta}. \nonumber
\end{eqnarray}  
By \eqref{HopeFinIneq},
\begin{equation}\label{finalManipEta}
\epsilon c(x_\eta)u(x_\eta)  - c(y_\eta)v(y_\eta) \le \left(\frac{3\text{Lip}(a)^2}{2\gamma^2}+2\text{Lip}(b)\right)\frac{|x_\eta-y_\eta|^2}{2\eta} + \text{Lip}(f)|x_\eta-y_\eta|.
\end{equation}
\par 4. Let $(x_\epsilon, x_\epsilon)$ be a limit point of $(x_\eta, y_\eta)$ through as sequence of $\eta\rightarrow 0^+.$ If $x_\epsilon\in \partial O$, we have from our remarks above that
$$
\epsilon u(x_\epsilon)-v(x_\epsilon)\le C(1-\epsilon).
$$
If $x_\epsilon\in O$, we let $\eta\rightarrow 0^+$ through the appropriate subsequence in \eqref{finalManipEta} and recall \eqref{LimLem} to arrive at 
$$
c(x_\epsilon)(\epsilon u(x_\epsilon)-v(x_\epsilon))\le 0.
$$
This inequality implies $\epsilon u(x_\epsilon)-v(x_\epsilon)\le 0$, and so in either case,
$$
\epsilon u(x)-v(x)\le \epsilon u(x_\epsilon)-v(x_\epsilon)\le C(1-\epsilon), \quad x\in \overline{O}.
$$
We conclude by letting $\epsilon\rightarrow 1^-.$
\end{proof}

\begin{rem}\label{PerronRemark}
With a comparison principle in hand, we can now employ a routine application of Perron's method to obtain the existence of solutions.   Indeed, observe that 
\begin{equation}\label{subsoln} 
\underline{u}\equiv 0 \nonumber
\end{equation}
is a subsolution of \eqref{mainPDE}; and $\bar{u}$, the unique solution of 
\begin{equation}\label{Eq4SupSoln}
\begin{cases}
Lv=f, \quad x\in O\\
\hspace{.125in} v=0, \quad x\in \partial O
\end{cases},
\end{equation}
is a supersolution of \eqref{mainPDE}. Therefore, Perron's method applies from which we conclude
$$
u(x):=\sup\left\{ w(x): \underline{u}\le w \le \overline{u}, \; \text{$w$ is a subsolution of \eqref{mainPDE}}\right\}
$$
is a viscosity solution (Theorem 4.1 in \cite{CIL}).  By the comparison principle, this solution is unique.  For the remainder of this paper, we pursue the 
regularity of solutions of \eqref{mainPDE}.
\end{rem}

\section{Penalization method}\label{regularity}
In this section, we analyze solutions of the penalized equation \eqref{penalizedPDE}

\begin{equation}
\begin{cases}
Lu^\epsilon + \beta_\epsilon(H(Du^\epsilon))=f, & x\in O\\
\hspace{1.15in}u^\epsilon = 0, & x\in \partial O
\end{cases},\nonumber
\end{equation}
where $(\beta_\epsilon)_{\epsilon>0}$ is a family of functions $(\beta_\epsilon)_{\epsilon>0}$ satisfying
\begin{equation}\label{betaProp}
\begin{cases}
\beta_\epsilon \in C^\infty(\R) \\
\beta_\epsilon = 0, \; z\le 0 \\
\beta_\epsilon >0, \; z>0\\ 
\beta_\epsilon'\ge 0\\
\beta_\epsilon''\ge 0 \\
\beta_\epsilon(z)=\frac{z-\epsilon}{\epsilon}, \quad z\ge 2\epsilon
\end{cases}.
\end{equation}
For each $\epsilon>0$, we think of $\beta_\epsilon$ as a type of smoothing of $z\mapsto (z/\epsilon)^+$; for small $\epsilon$, we think of $\beta_\epsilon$ as a smooth approximation
of the set valued mapping
$$
\beta_0(t)
=
\begin{cases}
\{0\}, \quad &t< 0\\
[0,\infty], \quad &t=0
\end{cases}.
$$
It will be a standing assumption that such a family of functions satisfying \eqref{betaProp} exists. The reason for using this approximation is from the following intuition. 
Since the values of $\beta_\epsilon(H(Du^\epsilon))$ can be large when $H(Du^\epsilon)>0$ and $\epsilon$ small, solutions will seek to satisfy  $H(Du^\epsilon)\le0$ and,
in this sense, become closer to satisfying equation \eqref{mainPDE}.    

\par In our analysis of \eqref{penalizedPDE}, we make the following special assumptions on $H$
\begin{equation}\label{NewHcond}
\begin{cases}
H\in C^2(\R^n)\\
D^2H \ge \theta, \; \text{for some $\theta\in (0,1)$}\\
\sup_{p\in \R^n}|D^2H(p)|<\infty.
\end{cases}.
\end{equation}
Admittedly, Theorem \ref{mainTheorem} addresses a much larger class of gradient constraints $H$. However, we will see in the next section that the assumptions made above can be relaxed by replacing a general $H$ with (a smoothing of) an appropriate inf-convolution.

\par Notice that \eqref{penalizedPDE} is a semi-linear, uniformly elliptic PDE with smooth coefficients.  By our growth assumptions on $\beta_\epsilon$ and \eqref{NewHcond} $\beta_\epsilon(H(p))$ grows at most quadratically as $|p|\rightarrow \infty$, for each $\epsilon>0$. It follows that \eqref{penalizedPDE} has a unique, classical solution $u^\epsilon$ (Theorem 15.10 in \cite{GT}).   Our goal is to derive $W^{2,p}$ estimates $(1\le p\le \infty)$ on solutions that are independent of all $\epsilon>0$ and small. Such estimates would aid us in proving that a subsequence of $u^\epsilon$ converges to $u$, the solution of \eqref{mainPDE}, in  $C^{1}_{\text{loc}}(O)$ as $\epsilon\rightarrow 0^+$.  To this end, our main result concerning the penalization method is as follows. 

\begin{prop}\label{PenalProp} Assume that $O'\subset \subset O$, $1< p<\infty$ and $0<\epsilon<\theta$.  We have the following estimates:  \\
$(i)$ 
$$
|u^\epsilon|_{W^{1,\infty}(O)}\le C
$$
for a universal constant $C$;\\
(ii)
$$
|u^\epsilon|_{W^{2,p}(O')}\le C_1,
$$
for some $C_1$ depending on $p$ and $O'$;\\
$(iii)$ 
$$
|u^\epsilon|_{W^{2,\infty}(O')}\le C_2,
$$
for some $C_2$ depending on $\theta$, $O'$,  and
$$
\max_{|p|\le C}|D^2H(p)|.
$$
\end{prop}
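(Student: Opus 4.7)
The three estimates are obtained by combining maximum-principle comparisons (to handle the $L^\infty$ norms) with Bernstein-type differentiation of \eqref{penalizedPDE}, exploiting the sign $\beta_\epsilon \ge 0$, the hypothesis $H(0) < 0$, and the uniform convexity $D^2H \ge \theta I$ to extract a good sign from the otherwise unfavorable $\beta_\epsilon$-term. Part $(i)$ is proved first; then it reduces $(ii)$ to a uniform bound on the nonlinearity followed by linear $W^{2,p}$ theory; a further differentiation of the PDE yields $(iii)$.

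\textbf{Proof of $(i)$.} For the $L^\infty$ bound, note that $\beta_\epsilon \ge 0$ forces $Lu^\epsilon \le f = L\bar u$ in $O$ with $u^\epsilon = \bar u = 0$ on $\partial O$, so the maximum principle (valid since $c \ge \delta > 0$) gives $u^\epsilon \le \bar u$. The lower bound $u^\epsilon \ge 0$ follows by evaluating the PDE at an interior minimum: there $Du^\epsilon = 0$, $D^2u^\epsilon \ge 0$, $\beta_\epsilon(H(0)) = 0$ since $H(0) < 0$, and $f \ge 0$, so $c u^\epsilon \ge 0$. For the gradient estimate, I split into a boundary and an interior bound. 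Since $0 \le u^\epsilon \le \bar u$ in $\overline{O}$ with equality on $\partial O$, the inner normal derivatives satisfy $0 \le \partial_\nu u^\epsilon \le \partial_\nu \bar u$, yielding the boundary bound. For the interior bound I apply a Bernstein argument to $v := H(Du^\epsilon)$: differentiating the PDE in $x_k$, multiplying by $H_k(Du^\epsilon)$, and summing produces, after using $v_i = H_k(Du^\epsilon)u^\epsilon_{ki}$ and the chain rule for $D^2 v$,
\[
Lv + \beta'_\epsilon(v)\,H_k(Du^\epsilon) v_k = -\,a \cdot (D^2u^\epsilon)(D^2H)(D^2u^\epsilon) + R,
\]
where $R$ collects terms that are linear in $D^2u^\epsilon$ with coefficients controlled by the data and by $|DH(Du^\epsilon)|$. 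At an interior maximum $x_0$ of $v$, one has $Dv(x_0) = 0$ (so the $\beta'_\epsilon$ term drops out) and $-a \cdot D^2 v \ge 0$, hence $cv(x_0) \le Lv(x_0)$. Uniform convexity gives $-a\cdot(D^2u^\epsilon)(D^2H)(D^2u^\epsilon) \le -\gamma\theta|D^2u^\epsilon|^2$; combined with $H(p) - DH(p)\cdot p \le H(0) < 0$ (by convexity of $H$), Young's inequality absorbs $R$ into the quadratic negative term, yielding $v(x_0) \le C$. Coupled with coercivity $H(p) \ge (\theta/2)|p|^2 - C$, this produces the interior Lipschitz bound $|Du^\epsilon|_{L^\infty(O)} \le C$.

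\textbf{Proof of $(ii)$.} Given $(i)$, it suffices to show $\|\beta_\epsilon(H(Du^\epsilon))\|_{L^\infty(O)} \le C$ uniformly in $\epsilon$: writing \eqref{penalizedPDE} as $Lu^\epsilon = f - \beta_\epsilon(H(Du^\epsilon))$ with a bounded right-hand side, interior Calder\'on-Zygmund $W^{2,p}$ estimates for the linear elliptic operator $L$ (e.g.\ Theorem 9.11 of Gilbarg-Trudinger) on $O' \subset\subset O$ immediately give $(ii)$ for every $p \in (1,\infty)$. The uniform bound on $\beta_\epsilon(v)$ is obtained by a refined Bernstein argument applied to the auxiliary function $w := \beta_\epsilon(v)$: at an interior maximum of $w$ with $w(x_0) > 0$, the condition $\beta'_\epsilon(v) > 0$ forces $Dv(x_0) = 0$, so the same computation as in $(i)$ applies at $x_0$, while the PDE directly reads $w(x_0) = f(x_0) - Lu^\epsilon(x_0)$. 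Using the negative term $-\gamma\theta|D^2u^\epsilon(x_0)|^2$ from uniform convexity to dominate the remaining contributions gives $w(x_0) \le C$ independent of $\epsilon$.

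\textbf{Proof of $(iii)$.} I apply a second-derivative Bernstein argument. Fix a direction $\xi \in S^{n-1}$ and differentiate \eqref{penalizedPDE} twice in $\xi$; the penalty contributes $\beta''_\epsilon(v)(v_\xi)^2 + \beta'_\epsilon(v)\,\partial_{\xi\xi} H(Du^\epsilon)$, whose first term is nonnegative and whose second expands as $\beta'_\epsilon(v)\bigl[H_{ij}(Du^\epsilon)u^\epsilon_{i\xi}u^\epsilon_{j\xi} + H_i(Du^\epsilon)u^\epsilon_{i\xi\xi}\bigr]$. The first quadratic piece is nonnegative (by convexity of $H$) and thus has the right sign; the second piece is absorbed into $L$ of $u^\epsilon_{\xi\xi}$. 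Applying the maximum principle to an auxiliary function such as $e^{\lambda u^\epsilon}u^\epsilon_{\xi\xi}$ with $\lambda$ suitably large, and using the bound from $(i)$ on the range of $Du^\epsilon$ to invoke $\max_{|p|\le C}|D^2H(p)|$, yields a uniform upper bound on $u^\epsilon_{\xi\xi}$ on $O'$. A matching lower bound is obtained via the PDE trace together with the ellipticity, giving $\|D^2u^\epsilon\|_{L^\infty(O')} \le C_2$.

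\textbf{Main obstacle.} The delicate point is controlling the penalty derivative $\beta'_\epsilon$, which is of order $1/\epsilon$ on the linear branch of $\beta_\epsilon$. In both $(ii)$ and $(iii)$ the Bernstein computation must extract precisely the right quadratic negative term from $D^2H \ge \theta I$ in order to dominate the potentially singular $\beta'_\epsilon$ contributions; it is exactly here that uniform convexity of $H$, rather than mere convexity, becomes essential, and also where the dependence of $C_2$ on $\max_{|p|\le C}|D^2H(p)|$ enters.
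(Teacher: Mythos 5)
Your overall architecture matches the paper's: comparison with $\bar u$ for the $L^\infty$ and boundary-gradient bounds, a Bernstein argument for the interior gradient, a uniform interior bound on $\beta_\epsilon(H(Du^\epsilon))$ followed by linear Calder\'on--Zygmund theory for $(ii)$, and a second-order Bernstein argument for $(iii)$. However, there is a genuine gap in your interior gradient estimate in part $(i)$, and it propagates (since $(ii)$ and $(iii)$ rely on it). Your auxiliary function is $v=H(Du^\epsilon)$. At an interior maximum of $v$ the term $\beta'_\epsilon(v)\,DH\cdot Dv$ vanishes, so the penalty contributes \emph{nothing positive}; what remains is
$$
a\cdot (D^2u)(D^2H)(D^2u)\;\le\;\sum_k H_k\,a_{x_k}\cdot D^2u \;-\; c\,DH\cdot Du\;+\;(\text{terms linear in } Du,\,DH),
$$
and since $|DH(Du)|\le C(1+|Du|)$, the right-hand side contains terms of size $C(1+|Du|)\,|D^2u|$ and $C(1+|Du|)^2$ with constants built from $\mathrm{Lip}(a),\mathrm{Lip}(b),\sup|D^2H|$, etc. After Young's inequality the quadratic negative term $-\gamma\theta|D^2u|^2$ absorbs the $|D^2u|$ factors only at the price of another $C|Du|^2$, and the only favorable quadratic term available, $c\,(DH\cdot Du - H)\ge \delta\theta|Du|^2/2$, carries a \emph{fixed small} constant that cannot dominate $C|Du|^2$ in general. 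Your inequality $H(p)-DH(p)\cdot p\le H(0)<0$ never enters the computation in a way that rescues this. The paper's fix is to take $v=|Du^\epsilon|^2-\lambda u^\epsilon$ with a free large parameter $\lambda$: substituting the PDE for $a\cdot D^2u$ in $-\lambda\, a\cdot D^2u$ produces the term $\lambda(\beta'\,DH\cdot Du-\beta)\ge \lambda\beta'\theta|Du|^2/2$, and one then argues by dichotomy at the maximum point: either $\beta'\le 1/\theta$, in which case $\beta\le 1$, hence $H(Du(x_0))\le 2\epsilon$ and coercivity of $H$ bounds $|Du(x_0)|$ outright; or $\beta'\ge 1/\theta$, in which case the term $\lambda|Du|^2/2$ dominates every lower-order contribution for $\lambda$ large. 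Your scheme has neither the large parameter nor this dichotomy, so the estimate does not close.

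Two secondary issues. First, in $(ii)$ and $(iii)$ your maxima may occur on $\partial O$, where there is no control of $H(Du^\epsilon)$ beyond $|Du^\epsilon|\le C$ (so $\beta_\epsilon(H(Du^\epsilon))$ could be of order $1/\epsilon$ there) and no control of $D^2u^\epsilon$ at all; the estimates are interior, so a cutoff $\eta\in C^\infty_c(O)$ must be built into the auxiliary function, as in the paper's $\eta\beta_\epsilon(H(Du^\epsilon))+\tfrac12|Du^\epsilon|^2$ and $\tfrac12\eta^2|D^2u^\epsilon|^2+\lambda\eta\beta_\epsilon(H(Du^\epsilon))+\tfrac{\mu}2|Du^\epsilon|^2$. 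Second, in $(iii)$ differentiating the variable coefficients (and the cutoff) produces terms linear in $D^3u^\epsilon$, e.g. $a_{x_k}\cdot D^2u^\epsilon_{x_\ell}$, which your function $e^{\lambda u}u_{\xi\xi}$ cannot absorb; one needs the coercive term $\gamma\eta^2|D^3u^\epsilon|^2$ that comes from working with $\eta^2|D^2u^\epsilon|^2$. Moreover the penalty generates the cubic term $\beta'\eta^2\,D^2u\cdot (D^2u)(D^2H)(D^2u)$, which is not signed and is exactly why the paper couples the Hessian term to $\lambda\eta\beta_\epsilon(H(Du^\epsilon))$ with the $\epsilon$-dependent choice $\lambda=\alpha M_\epsilon$; this is also where the constant $\max_{|p|\le C}|D^2H(p)|$ in $(iii)$ enters.
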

The proof Proposition \ref{PenalProp} is accomplished through the following sequence of lemmas, and we will obtain the desired estimates by employing the Bernstein method.

\begin{lem}
There is a constant $C$ such that 
$$
|u^\epsilon(x)|\le C, \quad x\in \overline{O}
$$
for $\epsilon >0.$
\end{lem}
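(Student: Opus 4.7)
The plan is to sandwich $u^\epsilon$ between two explicit $\epsilon$-independent barriers: the identically zero function and the classical solution $\bar u$ of the linear Dirichlet problem $L\bar u=f$ in $O$, $\bar u|_{\partial O}=0$, introduced in Remark \ref{PerronRemark}. Standard Schauder theory gives $\bar u\in C^2(\overline O)$, so $M:=\|\bar u\|_{L^\infty(\overline O)}$ is a finite constant depending only on the data, and the target estimate will be $0\le u^\epsilon\le M$.

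For the upper bound, I would set $w:=u^\epsilon-\bar u$, subtract the penalized equation from the linear one to obtain $Lw=-\beta_\epsilon(H(Du^\epsilon))\le 0$ in $O$, with $w=0$ on $\partial O$. Since $L$ is uniformly elliptic and $c\ge\delta>0$, the classical maximum principle for $L$ immediately yields $w\le 0$, i.e.\ $u^\epsilon\le \bar u\le M$.

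For the lower bound, I would argue at an interior minimum. If $u^\epsilon$ attained a negative minimum at some $x_0\in O$, then $Du^\epsilon(x_0)=0$ and $D^2u^\epsilon(x_0)\ge 0$, so $-a(x_0)\cdot D^2u^\epsilon(x_0)\le 0$ and the drift term vanishes. Evaluating \eqref{penalizedPDE} at $x_0$ would give $c(x_0)u^\epsilon(x_0)\ge f(x_0)-\beta_\epsilon(H(0))$. Since the standing assumption $H(0)<0$ from \eqref{Hcond} is maintained throughout the regularity analysis, $\beta_\epsilon(H(0))=0$, and combined with $f\ge 0$ and $c(x_0)>0$ this forces $u^\epsilon(x_0)\ge 0$, a contradiction. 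Together with $u^\epsilon|_{\partial O}=0$ this gives $u^\epsilon\ge 0$ on $\overline O$, completing the sandwich.

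The argument is a clean two-barrier maximum principle computation, so there is no real obstacle. The one thing worth flagging is the dependence on $H(0)<0$ for the lower barrier $v\equiv 0$ to remain a subsolution of the penalized PDE; this is why the sign condition from \eqref{Hcond} must be preserved by whatever smoothing or inf-convolution procedure is used in the next section to reduce a general $H$ satisfying \eqref{Hcond} to one satisfying the stronger \eqref{NewHcond}.
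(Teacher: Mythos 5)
Your proof is correct and follows essentially the same route as the paper: the paper also sandwiches $u^\epsilon$ between the subsolution $\underline{u}\equiv 0$ (which uses $H(0)<0$, hence $\beta_\epsilon(H(0))=0$, together with $f\ge 0$) and the supersolution $\bar u$ solving \eqref{Eq4SupSoln}, invoking the comparison/maximum principle for the uniformly elliptic operator with $c\ge\delta>0$. You have merely written out the maximum-principle details that the paper leaves implicit.
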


\begin{proof}
Let $\bar{u}$ be the unique smooth solution of \eqref{Eq4SupSoln}.  As $\bar{u}$ is a supersolution of equation \eqref{penalizedPDE}, $u^\epsilon\le \bar{u}$; while $u^\epsilon\ge 0$, since 
$\underline{u}: x\mapsto 0$ is a subsolution of \eqref{penalizedPDE}.  Hence, $0\le u^\epsilon\le \bar{u}.$
\end{proof}
An immediate corollary of the above proof is

\begin{cor}
There is a constant $C$ such that 
$$
|Du^\epsilon(x)|\le C, \quad x\in\partial O
$$
for $\epsilon >0.$
\end{cor}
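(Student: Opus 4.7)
The plan is to extract the boundary gradient bound directly from the sandwich $0 \le u^\epsilon \le \bar{u}$ obtained in the previous lemma, which is precisely why this is an \emph{immediate} corollary. The three key ingredients are: (a) $u^\epsilon$, $\underline{u}\equiv 0$, and $\bar{u}$ all vanish on $\partial O$; (b) $u^\epsilon \in C^2(\overline{O})$, since it solves the smooth semilinear uniformly elliptic problem \eqref{penalizedPDE} (Theorem 15.10 in \cite{GT}); and (c) $\bar{u}\in C^2(\overline{O})$ by standard Schauder theory for \eqref{Eq4SupSoln}, using smoothness of $L$, $f$, and $\partial O$.

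First I would split $Du^\epsilon$ into tangential and normal parts at a point $x_0\in\partial O$. Since $u^\epsilon\equiv 0$ on $\partial O$, every tangential derivative vanishes there, so $Du^\epsilon(x_0) = \partial_\nu u^\epsilon(x_0)\,\nu(x_0)$, where $\nu(x_0)$ is the inward unit normal. It therefore suffices to control $|\partial_\nu u^\epsilon(x_0)|$ by a constant independent of $\epsilon$.

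Next, for small $t>0$ the point $x_0+t\nu(x_0)$ lies in $\overline{O}$; evaluating the inequality $0\le u^\epsilon\le \bar{u}$ there, subtracting the common boundary value $0$, and dividing by $t$ yields
$$0 \le \frac{u^\epsilon(x_0+t\nu(x_0))}{t} \le \frac{\bar{u}(x_0+t\nu(x_0))}{t}.$$
Letting $t\to 0^+$ gives $0\le \partial_\nu u^\epsilon(x_0)\le \partial_\nu \bar{u}(x_0)$, and since $|D\bar{u}|_{L^\infty(\overline{O})}\le C$ by the regularity of $\bar{u}$, one concludes $|Du^\epsilon(x_0)|\le C$ uniformly in $\epsilon$. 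No substantive obstacle arises; this is a standard barrier-type readoff of normal-derivative bounds from pointwise function bounds when the function and barrier agree on a set, and essentially all the content was packaged into the $C^0$ sandwich proven in the previous lemma.
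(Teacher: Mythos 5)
Your proposal is correct and is essentially the paper's own argument: the paper likewise reads off the normal derivative squeeze $\frac{\partial \bar{u}}{\partial \nu}\le \frac{\partial u^\epsilon}{\partial \nu}\le 0$ (outward normal) from the sandwich $0\le u^\epsilon\le\bar{u}$ with equality on $\partial O$, the tangential derivatives vanishing since $u^\epsilon\equiv 0$ there. Your version merely makes explicit the difference-quotient limit and the uniform bound on $D\bar{u}$, which the paper leaves implicit.
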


\begin{proof}
By the proof of the previous lemma, we have $0\le u^\epsilon\le \bar{u}$ with equality holding 
on $\partial O.$ Thus,
$$
\frac{\partial \bar{u}(x)}{\partial \nu}\le \frac{\partial u^\epsilon(x)}{\partial \nu}\le 0, \quad x\in\partial O
$$
where $\nu$ is the outward normal on $\partial O$ (which is assumed to be smooth).
\end{proof}

\begin{lem}\label{uepsGrad}
There is a constant $C$  such that 
$$
|Du^\epsilon(x)|\le C, \quad x\in \overline{O}
$$
for $0< \epsilon <\theta.$
\end{lem}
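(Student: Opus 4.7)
The plan is to apply a Bernstein-type maximum principle argument to the auxiliary function $w := H(Du^\epsilon)$ on $\overline{O}$, exploiting the uniform convexity of $H$ in \eqref{NewHcond}. The previous corollary already yields $|Du^\epsilon|\le C$ on $\partial O$ uniformly in $\epsilon$, so $w$ is bounded on $\partial O$. If the maximum of $w$ over $\overline{O}$ is attained on $\partial O$ we are done; otherwise it is attained at some $x_0\in O$, where $Dw(x_0)=0$ and $-a(x_0)\cdot D^2 w(x_0)\ge 0$ by ellipticity.

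Next I would compute $Lw$ by the chain rule. The $D^2w$ contribution splits into a good term $a\cdot(D^2u^\epsilon\, D^2H(Du^\epsilon)\, D^2u^\epsilon)$ and a third-derivative term $\sum_k H_{p_k}(Du^\epsilon)\,(a\cdot D^2u^\epsilon_{x_k})$. The third derivatives are eliminated by differentiating \eqref{penalizedPDE} once in $x_k$, which yields $-a\cdot D^2 u^\epsilon_{x_k}=f_{x_k}-\beta_\epsilon'(w)\,w_{x_k}+R_k$, where $R_k$ collects lower-order terms involving $Da,Db,Dc,Du^\epsilon$ and $D^2u^\epsilon$. Substituting produces an identity for $Lw$ containing (i) the good quadratic, (ii) a first-order penalty term $\beta_\epsilon'(w)H_p(Du^\epsilon)\cdot Dw$, and (iii) a remainder bounded by $C(1+|Du^\epsilon|)(1+|Du^\epsilon|+|D^2u^\epsilon|)$, with the growth $|H_p(p)|\le C(1+|p|)$ furnished by $\sup|D^2H|<\infty$.

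Evaluating at $x_0$, the penalty term vanishes because $Dw(x_0)=0$, and the uniform convexity $D^2H\ge\theta I$ together with $a\ge\gamma I$ gives $a\cdot(D^2u^\epsilon\, D^2H\, D^2u^\epsilon)\ge\theta\gamma|D^2u^\epsilon|^2$. Combined with $Lw(x_0)\ge c(x_0)w(x_0)\ge 0$ (one may assume $w(x_0)\ge 0$, else nothing to prove) and Young's inequality to absorb the mixed $|D^2u^\epsilon|$ term, this yields the one-sided Hessian bound $|D^2u^\epsilon(x_0)|\le C(1+|Du^\epsilon(x_0)|)$. Feeding this back into \eqref{penalizedPDE} at $x_0$ gives
\[
\beta_\epsilon(w(x_0))=f(x_0)-Lu^\epsilon(x_0)\le C\bigl(1+|Du^\epsilon(x_0)|\bigr),
\]
and inverting $\beta_\epsilon$ using $\beta_\epsilon(z)=(z-\epsilon)/\epsilon$ for $z\ge 2\epsilon$ (with the case $w(x_0)<2\epsilon$ handled trivially) produces $w(x_0)\le C\epsilon\bigl(1+|Du^\epsilon(x_0)|\bigr)$.

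To close, the coercivity estimate $H(p)\ge\frac{\theta}{2}|p|^2-C_0$, which follows from uniform convexity and $H(0)<0$, combined with the previous inequality gives $\frac{\theta}{2}|Du^\epsilon(x_0)|^2-C_0\le C\epsilon(1+|Du^\epsilon(x_0)|)$. For $0<\epsilon<\theta$ (small enough once all constants are tracked), this forces $|Du^\epsilon(x_0)|\le C'$. Since $w$ attains its maximum at $x_0$ and $|Du^\epsilon|$ is controlled by $w$ through coercivity, we conclude $|Du^\epsilon|\le C'$ throughout $\overline{O}$. The main obstacle I anticipate is the bookkeeping in the chain-rule expansion of $L(H(Du^\epsilon))$, in particular the clean elimination of third derivatives of $u^\epsilon$ via the differentiated PDE and the verification that enough of the good term $\theta\gamma|D^2u^\epsilon|^2$ survives after Young's inequality to absorb the remainder.
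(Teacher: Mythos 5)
Your proof is correct but follows a genuinely different route from the paper's. The paper applies the Bernstein method to the auxiliary function $v^\epsilon=|Du^\epsilon|^2-\lambda u^\epsilon$: at an interior maximum it uses $Dv^\epsilon=0$, the identity $\beta_\epsilon(z)\le z\beta_\epsilon'(z)$, and the uniform-convexity inequality $p\cdot DH(p)-H(p)\ge\tfrac{\theta}{2}|p|^2$ to manufacture a good term $\tfrac{\lambda\theta}{2}\beta_\epsilon'|Du^\epsilon|^2$, then case-splits on whether $\beta_\epsilon'\le 1/\theta$ (which forces $H(Du^\epsilon(x_0))<2\epsilon$, hence a coercivity bound) or $\beta_\epsilon'\ge 1/\theta$ (which lets one choose $\lambda$ large and close directly). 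You instead apply Bernstein to $w=H(Du^\epsilon)$, so that the penalty contribution $\beta_\epsilon'(w)\,DH\cdot Dw$ vanishes automatically at the interior maximum; you then harvest the uniform-convexity term $a\cdot D^2u^\epsilon D^2H\,D^2u^\epsilon\ge\gamma\theta|D^2u^\epsilon|^2$ to get the Hessian bound $|D^2u^\epsilon(x_0)|\le C(1+|Du^\epsilon(x_0)|)$, feed that back into the PDE to bound $\beta_\epsilon(w(x_0))$, invert $\beta_\epsilon$ to obtain $w(x_0)\le C\epsilon(1+|Du^\epsilon(x_0)|)$, and finish by coercivity of $H$. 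Both are valid. The paper's choice avoids inverting $\beta_\epsilon$ and produces a cleaner $\theta$-dependence (the $\beta_\epsilon'\ge 1/\theta$ branch gives a bound independent of $\theta$), whereas your choice of $w$ exploits the structure of the penalty term more transparently — the key cancellation $Dw(x_0)=0$ is immediate — at the cost of constants that degrade like negative powers of $\theta$. Two small remarks: the assumption $w(x_0)\ge 0$ plays no role (the $cw(x_0)$ terms cancel on both sides of your inequality); and the final Young's-inequality step $C\epsilon|Du^\epsilon(x_0)|\le\tfrac{\theta}{8}|Du^\epsilon(x_0)|^2+\tfrac{2C^2\epsilon^2}{\theta}$ already closes for all $0<\epsilon<\theta$, so the parenthetical ``small enough once all constants are tracked'' is unnecessary — the lemma holds on the full stated range.
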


\begin{proof}
1. It suffices to bound the function 
$$
v^\epsilon(x)=|Du^\epsilon(x)|^2-\lambda u^\epsilon(x), \quad x\in \overline{O}
$$
from above, for some universal (that is, $\epsilon$ independent) constant $\lambda >0$.  To this end, we suppress $\epsilon$ dependence, function arguments and make use of equation \eqref{penalizedPDE} to compute the following identity
\begin{align}\label{vNotComp}
a\cdot D^2v &= 2 a D^2u\cdot D^2u - 2\sum^n_{i,j=1}u_{x_ix_j}Du\cdot Da_{ij} + Dv\cdot (\beta' DH + b) \nonumber \\
& +2 Du\cdot ( uDc + cDu+ Db\cdot Du -Df) -\lambda(cu + b\cdot Du -f) \nonumber \\
& + \lambda (\beta' Du\cdot DH - \beta).
\end{align}
\par 2. Since  $H$ is uniformly convex with $D^2H\ge \theta$ and $H(0)\le 0$, 
$$
p\cdot DH(p) - H(p)\ge \theta |p|^2/2, \quad p\in \R^n.
$$
Also note that, $\beta_\epsilon(z)\le z\beta_\epsilon'(z)$ for all $z\in \R$ which implies 
$$
\beta' Du\cdot DH - \beta\ge \beta' (Du\cdot DH - H)\ge \beta'\frac{\theta}{2}|Du|^2
$$
Using this inequality, the previous lemma and the uniform ellipticity of $a$ \eqref{aElliptic} we have from \eqref{vNotComp} the estimate
\begin{equation}\label{vComp}
a\cdot D^2v \ge  - C |Du|^2 -C + Dv\cdot (\beta' DH + b) + \frac{\lambda\theta \beta'}{2}  |Du|^2.
\end{equation}

\par 3. Let $x_0\in \overline{O}$ be a maximizing point for $v$. If $x_0\in \partial O$, a bound on $|Du(x_0)|^2$ that is independent of $\epsilon>0$
is immediate from the previous corollary. If $x_0 \in O$, then 
$$
Dv(x_0)=0, \quad a(x_0)\cdot D^2v(x_0)\le 0.
$$
Now, if $\beta'=\beta'(H(Du(x_0)))\le 1/\theta< 1/\epsilon$, then $\beta=\beta(H(Du(x_0)))\le 1$ by \eqref{betaProp}.  In particular, $H(Du(x_0))\le 2\epsilon\le 2\theta\le 2$ which implies a bound on $|Du(x_0)|$ independent of $\epsilon \in (0,\theta)$. If $\beta'(H(Du(x_0)))\ge 1/\theta$, \eqref{vComp} gives 
\begin{equation}
0 \ge  - C |Du(x_0)|^2 -C +  \frac{\lambda}{2} |Du(x_0)|^2, \nonumber
\end{equation}
which implies a bound on $|Du(x_0)|^2$ independent of $\epsilon\in (0,\theta)$, for $\lambda>0$ chosen large enough. 
\end{proof}

\begin{lem}\label{betaLem}
Let $O'\subset\subset O$ . There is a constant $C_1$ depending on $O'$ such that 
$$
0\le \beta_\epsilon(H(Du^\epsilon(x)))\le C_1, \quad x\in O'
$$
for $0<\epsilon<\theta$.
\end{lem}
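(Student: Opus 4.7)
The plan is to apply a localized Bernstein argument to $B := \beta_\epsilon(H(Du^\epsilon))$. The lower bound $B \geq 0$ is immediate from \eqref{betaProp}. For the upper bound, choose a cutoff $\zeta \in C_c^\infty(O)$ with $0 \leq \zeta \leq 1$ and $\zeta \equiv 1$ on $O'$, and consider $w := \zeta^2 B$. If the maximum of $w$ is attained at a point $x_0$ with $\zeta(x_0) = 0$, then $w \equiv 0$ and we are done; otherwise $\zeta(x_0) > 0$, and the first- and second-order conditions give the pointwise relations
\[
\zeta\, DB = -2B\, D\zeta \quad\text{and}\quad \zeta^2\, a \cdot D^2 B \leq C B
\]
at $x_0$, with $C$ depending only on $\zeta$ and $\|a\|_\infty$.

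Next, I compute $a \cdot D^2 B$ directly. Using $B_{x_i} = \beta'(H) H_{p_j}(Du^\epsilon)\, u^\epsilon_{x_j x_i}$ and differentiating once more produces a nonnegative $\beta'' \geq 0$ contribution, the principal term $\beta'\,\tr(a\, D^2u^\epsilon\, H_{pp}(Du^\epsilon)\, D^2u^\epsilon)$, and a third-order contribution $\beta'\, H_{p_j}\, a_{ik}\, u^\epsilon_{x_ix_kx_j}$. The last is eliminated by differentiating the penalized equation \eqref{penalizedPDE} in $x_j$ and contracting against $\beta' H_{p_j}$, after which the $b \cdot DB$ pieces cancel. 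Using uniform convexity $D^2 H \geq \theta I$, uniform ellipticity $a \geq \gamma I$, and the established bounds from Lemma \ref{uepsGrad}, this yields
\[
a \cdot D^2 B \;\geq\; \theta\gamma\,\beta'\, |D^2u^\epsilon|^2 \;+\; \beta'\, DH \cdot DB \;-\; C\beta'\bigl(1 + |D^2 u^\epsilon|\bigr).
\]

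Combining this with the second-order inequality and using the first-order relation to control $|\beta' DH \cdot DB| \leq C\beta' B/\zeta$ at $x_0$, we obtain
\[
\theta\gamma\, \zeta^2 \beta'\, |D^2 u^\epsilon|^2 \;\leq\; CB \;+\; C\zeta^2 \beta'\bigl(1 + |D^2u^\epsilon|\bigr) \;+\; C\zeta \beta'\, B.
\]
A dichotomy now closes the argument: if $B(x_0) \leq 1$ the bound is trivial, while if $B(x_0) > 1$ the explicit form $\beta_\epsilon(z) = (z-\epsilon)/\epsilon$ for $z \geq 2\epsilon$ in \eqref{betaProp} forces $H(Du^\epsilon(x_0)) > 2\epsilon$, so $\beta'(H(Du^\epsilon(x_0))) = 1/\epsilon > 1/\theta$. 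Dividing by $\beta'$, absorbing the $C|D^2u^\epsilon|$ contribution by Young's inequality against the good quadratic term, and using the identity $B = f - Lu^\epsilon$ (which yields $|D^2 u^\epsilon(x_0)| \geq c B(x_0) - C$), I arrive at a quadratic inequality of the form $B(x_0)^2 \leq C B(x_0)/\zeta(x_0)^2 + C$, so $\zeta(x_0)^2 B(x_0) \leq C_1$. Since $x_0$ maximizes $w$, this bounds $w$ on all of $O$; restricting to $O'$ where $\zeta \equiv 1$ gives the claim.

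The main obstacle is organizing the $D^3 u^\epsilon$ contributions in $a \cdot D^2 B$, which must be exchanged for lower-order quantities via the once-differentiated equation and then absorbed into the quadratic term $\theta\gamma\beta'|D^2u^\epsilon|^2$ supplied by the uniform convexity of $H$. The second essential ingredient is the precise linear structure of $\beta_\epsilon$ on $[2\epsilon,\infty)$: it is this that yields the quantitative bound $\beta' \geq 1/\theta$ whenever $B > 1$, permitting the crucial division by $\beta'$ on which the whole argument hinges.
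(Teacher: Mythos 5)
Your argument is correct and follows essentially the same route as the paper's: a localized Bernstein/maximum-principle estimate on $\beta_\epsilon(H(Du^\epsilon))$, using the once-differentiated equation to remove third derivatives, uniform convexity of $H$ plus ellipticity to produce the good term $\beta'|D^2u^\epsilon|^2$, the explicit linear form of $\beta_\epsilon$ on $[2\epsilon,\infty)$ to run the dichotomy, and $\beta_\epsilon(H(Du^\epsilon))=f-Lu^\epsilon\le C(1+|D^2u^\epsilon|)$ to convert the Hessian bound back into a bound on $\beta_\epsilon$. The only difference is organizational: you work with $\zeta^2\beta_\epsilon(H(Du^\epsilon))$ and divide by $\beta'\ge 1/\theta$ in the nontrivial case, whereas the paper adds $\tfrac12|Du^\epsilon|^2$ to the auxiliary function so as to also have a $\beta'$-free coercive term $\gamma|D^2u^\epsilon|^2$.
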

\begin{rem}
To simplify the arguments given below, we assume 
$$
b\equiv 0,\quad\text{and}\quad c\equiv \delta >0.
$$
It is straightforward to verify that incorporating more general coefficients $b$ and $c$ is merely technical and no new issues arise. 
\end{rem}

\begin{proof}
1.  It suffices to bound 
$$
v^\epsilon(x) :=\eta(x)\beta_\epsilon(H(Du^\epsilon(x)))+\frac{1}{2}|Du^\epsilon(x)|^2, \quad x\in \overline{O}
$$
for each $\eta\in C^\infty_c(O)$, $0\le \eta\le 1.$  As before, we will omit the $\epsilon$ dependence 
of $u^\epsilon$ and $v^\epsilon$, omit the arguments of functions, write $\beta$ for $\beta_\epsilon(H(Du^\epsilon))$ and  compute the following identity
for $a\cdot D^2v$

\begin{align}\label{betaaD2u}
a\cdot D^2v & = (a\cdot D^2\eta)\eta + \beta'DH\cdot Dv + \beta '' a D^2uDH\cdot D^2uDH  \nonumber \\
& \beta'\left\{  \eta\left(a\cdot D^2u D^2H D^2u + DH\cdot D(\delta u -f) - \sum^n_{i,j=1}u_{x_ix_j}Da_{ij}\cdot DH \right) \right. \nonumber \\
& \quad \quad  -\beta DH\cdot D\eta - DH\cdot D^2u Du + D^2u Du\cdot Du \text{\Huge{\}}}  \nonumber \\
& +aD^2u\cdot D^2u + Du\cdot D(\delta u -f) - \sum^n_{i,j=1}u_{x_ix_j}Da_{ij}\cdot Du.
\end{align}
Below, we will make use of the following inequalities  
\begin{equation}\label{betaaD2uINEQ}
\begin{cases}
\beta '' a D^2uDH\cdot D^2uDH\ge 0\\
a\cdot D^2u D^2H D^2u\ge \gamma\theta |D^2u|^2\\
aD^2u\cdot D^2u\ge\gamma|D^2u|^2\\
0\le \beta\le C\{1+|D^2u|\}
\end{cases},
\end{equation}
which follow from PDE \eqref{penalizedPDE} and our various assumptions on $\beta$, $a$ and H \eqref{betaProp}, \eqref{aElliptic}, \eqref{NewHcond}. 

\par 2.  Let $x_0\in \overline{O}$ be a maximizing point for $v$.  We may as well assume that $x_0\in O$; otherwise,  $v\le |Du(x_0|^2/2\le C$. In this case, we have 
$$
Dv(x_0)=0 \quad \text{and}\quad 0\ge a(x_0)\cdot D^2v(x_0).
$$
From these inequalities, \eqref{betaaD2u} and \eqref{betaaD2uINEQ}, it is straightforward to derive the following inequality
\begin{equation}\label{Realfinalad2v}
0 \ge \frac{\gamma}{2}|D^2u|^2- C-C\beta' \left\{|D^2u|+1\right\} 
\end{equation}
where $C$ denotes various constants that are independent of $\epsilon\in (0,\theta)$  (but may depend on  $\eta(x_0), D\eta(x_0), D^2\eta(x_0)$).  All functions in \eqref{Realfinalad2v} and in the rest of this proof are evaluated at $x_0.$

\par If 
$$
\beta '\le 1<1/\epsilon\quad\text{or}\quad \frac{\gamma}{2}|D^2u|^2- C\le 0,
$$
then we have a uniform upper bound on $\beta=\beta(H(Du(x_0)))$ and also the desired upper bound on $v(x_0)$. Otherwise,   \eqref{Realfinalad2v} implies
$$
0\ge\frac{1}{2}\gamma |D^2u|^2- C - C(|D^2u| +1)
$$ 
and in particular $|D^2u(x_0)|\le C$. By the last inequality in \eqref{betaaD2uINEQ}, $v\le C_1$ for some $C_1$ independent of $\epsilon\in (0,\theta)$ and only depending on $O'$.
\end{proof}
\noindent 
Thus far, we have established part $(i)$ of Propostion \ref{PenalProp}; the conclusion follows at once from the previous lemma.  The previous lemma will also aid us in obtaining a pointwise bound on the second derivatives of $u^\epsilon$ and thereby establish part $(ii)$ of Propostion \ref{PenalProp}. To this end, we adapt the approach by M. Wiegner \cite{W}.  

\begin{lem}
Let $O'\subset\subset O$ and $C>0$ as in Lemma \ref{uepsGrad}.  There is a constant $C_2$  depending on $\theta$, $O'$, $\max_{|p|\le C}|D^2H(p)|$
such that
$$
|u^\epsilon|_{W^{2,\infty}(O')}\le C_2
$$
for each $0<\epsilon <\theta$.
\end{lem}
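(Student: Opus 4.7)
The plan is to push the Bernstein method one derivative further than in Lemma \ref{betaLem}, this time obtaining a pointwise upper bound on $u^\epsilon_{\xi\xi}$ for every unit direction $\xi$. Fix $\xi\in\R^n$ with $|\xi|=1$ and set $w:=u^\epsilon_{\xi\xi}$. Differentiating the penalized PDE \eqref{penalizedPDE} once in $\xi$ produces a linear equation for $u^\epsilon_\xi$; differentiating again and collecting terms, the $\beta$-contribution expands as
\begin{equation*}
\partial_{\xi\xi}\bigl[\beta_\epsilon(H(Du^\epsilon))\bigr] = \beta_\epsilon''(H(Du^\epsilon))\bigl(DH\cdot Du^\epsilon_\xi\bigr)^2 + \beta_\epsilon'(H(Du^\epsilon))\bigl[D^2H(Du^\epsilon_\xi,Du^\epsilon_\xi) + DH\cdot Dw\bigr].
\end{equation*}
Because Lemma \ref{uepsGrad} supplies $|Du^\epsilon|\le C$, the quantities $DH(Du^\epsilon)$ and $D^2H(Du^\epsilon)$ are bounded by constants depending only on $\max_{|p|\le C}|D^2H(p)|$. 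Using $\beta_\epsilon''\ge 0$ and uniform convexity $D^2H\ge\theta$, one obtains the differential inequality
\begin{equation*}
\widetilde L w := Lw + \beta_\epsilon'\,DH(Du^\epsilon)\cdot Dw \;\le\; C_0 + C_0|D^2u^\epsilon| - \beta_\epsilon'\,\theta\,|Du^\epsilon_\xi|^2,
\end{equation*}
where the $C_0|D^2u^\epsilon|$ term arises from commuting $L$ with $\partial_{\xi\xi}$ (derivatives of the coefficients $a,b,c$) and the last (negative) term will be kept.

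Next I would introduce a cutoff $\eta\in C^\infty_c(O)$ with $\eta\equiv 1$ on $O'$ and $0\le\eta\le 1$, and examine the auxiliary function
\begin{equation*}
V(x) := \eta(x)^2 w(x) + \lambda\,|Du^\epsilon(x)|^2
\end{equation*}
for a large universal constant $\lambda$ to be chosen. If the maximum of $V$ on $\overline O$ is attained on $\partial O$ or outside $\{\eta>0\}$, then $V$ is already bounded by Lemma \ref{uepsGrad}. Otherwise at an interior maximizer $x_0$ one has $DV(x_0)=0$ and $\widetilde L V(x_0)\ge 0$; the first relation lets me solve for $\eta^2 Dw$ in terms of $w\,D\eta$ and $D^2u^\epsilon\cdot Du^\epsilon$, so the cumbersome $\widetilde L(\eta^2 w)$ expansion produces only $D^2u^\epsilon$ quantities (no $D^3u^\epsilon$). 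Combining with the previous inequality and the elementary Bernstein identity for $|Du^\epsilon|^2$ (already computed in Lemma \ref{uepsGrad}) yields, at $x_0$, a bound of the schematic form
\begin{equation*}
0 \;\le\; -\gamma\,\eta^2\,|D^2u^\epsilon|^2 + C\,\eta^2\bigl(1+|D^2u^\epsilon|\bigr) + \lambda C_\eta\bigl(1+|D^2u^\epsilon|\bigr) - \lambda\,\beta_\epsilon'\,\theta\,|Du^\epsilon_\xi|^2 + C_\eta\,\beta_\epsilon'\,w,
\end{equation*}
where $C_\eta$ collects bounds on $D\eta,D^2\eta$.

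The main obstacle is the $\beta_\epsilon'$ factor, which can be of order $1/\epsilon$. I would dispatch it by the same dichotomy used in Lemmas \ref{uepsGrad} and \ref{betaLem}: if $\beta_\epsilon'(H(Du^\epsilon(x_0)))\le 1/\theta$, then $\beta_\epsilon(H(Du^\epsilon(x_0)))\le 1$ by \eqref{betaProp}, so $|Du^\epsilon(x_0)|$ lies in a set where everything is controlled and the absorbing term $\gamma\eta^2|D^2u^\epsilon|^2$ dominates the remaining linear-in-$|D^2u^\epsilon|$ terms via Cauchy's inequality; if instead $\beta_\epsilon'\ge 1/\theta$, the previously retained negative term $-\lambda\beta_\epsilon'\theta|Du^\epsilon_\xi|^2$ can, by choosing $\lambda$ large enough, absorb $C_\eta\beta_\epsilon' w$ (since $Du^\epsilon_\xi$ is the $\xi$-column of $D^2u^\epsilon$, so $|Du^\epsilon_\xi|^2\ge w^2$ once $w>0$). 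In either case one obtains $\eta^2(x_0)w(x_0)\le C_2$, hence $V\le C_2$ everywhere and consequently $u^\epsilon_{\xi\xi}\le C_2$ on $O'$, uniformly in $\xi$ and in $\epsilon\in(0,\theta)$. A bound from below on $u^\epsilon_{\xi\xi}$ then follows from the pointwise bound on $a\cdot D^2u^\epsilon = f - \beta_\epsilon(H(Du^\epsilon)) - b\cdot Du^\epsilon - cu^\epsilon$ (bounded by Lemma \ref{betaLem}) combined with uniform ellipticity \eqref{aElliptic} and the just-established upper bound on $D^2u^\epsilon$, giving $|u^\epsilon|_{W^{2,\infty}(O')}\le C_2$ as required.
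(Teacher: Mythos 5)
Your strategy is genuinely different from the paper's, and unfortunately it has a gap that I do not see how to close within your framework. The paper bounds $\eta^2|D^2u^\epsilon|^2$ using the auxiliary function $v=\tfrac12\eta^2|D^2u^\epsilon|^2+\lambda\eta\,\beta_\epsilon(H(Du^\epsilon))+\tfrac{\mu}{2}|Du^\epsilon|^2$, and its Bochner identity for $\tfrac12\eta^2|D^2u^\epsilon|^2$ produces the crucial \emph{positive} quadratic term $\eta^2\,aD^3u^\epsilon\cdot D^3u^\epsilon\ge\gamma\eta^2|D^3u^\epsilon|^2$, which is what absorbs every third-derivative cross term via Cauchy--Schwarz. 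Your auxiliary function $V=\eta^2 u^\epsilon_{\xi\xi}+\lambda|Du^\epsilon|^2$ is only \emph{linear} in the Hessian, and the computation of $\widetilde L V$ generates no such absorbing term. Concretely, two places in your argument produce uncontrolled third derivatives. First, the commutator obtained by differentiating $-a\cdot D^2u^\epsilon$ twice in $\xi$ contains $-2a_\xi\cdot D^2 u^\epsilon_\xi$, which is of size $C|D^3u^\epsilon|$, not $C|D^2u^\epsilon|$ as you assert; so your inequality $\widetilde L w\le C_0+C_0|D^2u^\epsilon|-\beta'_\epsilon\theta|Du^\epsilon_\xi|^2$ is already missing a $|D^3u^\epsilon|$ contribution. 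Second, expanding $a\cdot D^2(\eta^2 w)$ produces the cross term $2\,aD(\eta^2)\cdot Dw=4\eta\,aD\eta\cdot Dw$, again of order $\eta|D^3u^\epsilon|$. You propose to eliminate $Dw$ using $DV(x_0)=0$, which gives $\eta^2 Dw=-wD(\eta^2)-2\lambda D^2u^\epsilon Du^\epsilon$; but the cross term needs $\eta Dw=\eta^{-1}\bigl(-wD(\eta^2)-2\lambda D^2u^\epsilon Du^\epsilon\bigr)$, which carries an unbounded factor $\eta^{-1}$ near $\partial\{\eta>0\}$. In short, with a cutoff multiplying a quantity that is \emph{linear} in $D^2u^\epsilon$ there is no $\eta^2|D^3u^\epsilon|^2$ to soak up these terms, and the estimate does not close.

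There is also a bookkeeping slip: the negative term $-\beta'_\epsilon\theta|Du^\epsilon_\xi|^2$ that you retain comes from $\eta^2\widetilde L w$ and therefore carries a factor $\eta^2$, not $\lambda$, in your schematic inequality; conversely, the good $\beta'_\epsilon$ contribution from the $\lambda|Du^\epsilon|^2$ piece (the analogue of step 2 in Lemma \ref{uepsGrad}) is $\lambda\beta'_\epsilon(Du^\epsilon\cdot DH-H)\ge\tfrac{\lambda\theta}{2}\beta'_\epsilon|Du^\epsilon|^2$, a quantity in $|Du^\epsilon|^2$, not $|Du^\epsilon_\xi|^2$. This matters because your absorption step ``$|Du^\epsilon_\xi|^2\ge w^2$'' relies on the $|Du^\epsilon_\xi|^2$ version. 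Even setting aside the third-derivative problem, I would encourage you to re-derive the schematic inequality carefully; the paper's device of choosing $\lambda=\alpha M_\epsilon$ with $M_\epsilon=\max\eta|D^2u^\epsilon|$ (so that the helpful cubic $\lambda\eta\gamma\theta|D^2u^\epsilon|^2$ dominates the harmful cubic $C'\eta^2|D^2u^\epsilon|^3$) is exactly the step your approach was designed to avoid, but the price is losing the $D^3u^\epsilon$ control. A correct version of your single-direction idea would need either a different positive term quadratic in $D^3u^\epsilon$, or a sup over $\xi$ argument that reintroduces $|D^2u^\epsilon|^2$, at which point one essentially recovers the paper's proof.
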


\begin{proof}
1. It is sufficient to bound, for each $\eta \in C^\infty_c(O)$ with $0\le \eta \le 1$, the quantity
$$
M_\epsilon:=\max_{x\in \overline{O}}\{\eta(x)|D^2u^\epsilon(x)|\}
$$
for all $0<\epsilon<\theta$. With this in mind, we shall bound the related function
$$
v^\epsilon(x)=\frac{1}{2}\eta(x)^2|D^2u^\epsilon(x)|^2 + \lambda\eta(x)\beta(H(Du^\epsilon(x)))+\frac{\mu}{2}|Du^\epsilon(x)|^2, \quad x\in \overline{O}
$$
from above. Here $\lambda, \mu$ are constants that will be chosen below.  As in previous proofs, we shall omit the $\epsilon$
dependence of $u^\epsilon,v^\epsilon$ and their derivatives and many times we will write $\beta$ for $\beta_\epsilon(H(Du^\epsilon))$.  

\par Direct computation gives
$$\begin{cases}
v_{x_i}=\eta\eta_{x_i}|D^2u|^2+ \eta^2D^2u\cdot D^2u_{x_i}+ \lambda(\eta_{x_i}\beta  +\eta \beta' DH\cdot Du_{x_{i}}) + \mu Du\cdot Du_{x_i}\\
v_{x_ix_j}=(\eta\eta_{x_ix_j}+\eta_{x_i}\eta_{x_j})|D^2u|^2+ 2\eta\eta_{x_i}D^2u\cdot D^2u_{x_j} +2\eta\eta_{x_j}D^2u\cdot D^2u_{x_i} + \\ 
\hspace{.5in}\eta^2 (D^2u_{x_i}\cdot D^2u_{x_j}+D^2u\cdot D^2u_{x_ix_j})+ \lambda\left[\eta_{x_ix_j}\beta +\eta_{x_i}\beta' DH\cdot Du_{x_j}+\eta_{x_j}\beta' DH\cdot Du_{x_i}  \right.\\
\hspace{.5in} \left.\eta\left\{\left(\beta'' (DH\cdot Du_{x_i})(DH\cdot Du_{x_j}) + \beta' (D^2HDu_{x_i}\cdot Du_{x_j} + DH\cdot Du_{x_ix_j})\right)
\right\}\right]\\
\hspace{.5in}  + \mu\left\{Du_{x_i}\cdot Du_{x_j} + Du\cdot Du_{x_ix_j} \right\}
\end{cases}.
$$
Using the above expressions for $v_{x_i}$ and $v_{x_ix_j}$ and the fact that $u$ solves the PDE \eqref{penalizedPDE}, it is straightforward to verify the following identity
\begin{eqnarray}\label{hessD2v}
a\cdot D^2v&= & (\eta a\cdot D^2\eta + aD\eta\cdot \eta)|D^2u|^2+ 4\sum^n_{i,j=1}a_{ij}\eta_{x_i}D^2u\cdot \eta D^2u_{x_j} +  \nonumber \\
&&  \eta^2\left(aD^3u\cdot D^3u+D^2u\cdot D^2(\delta u-f) -\sum^{n}_{k,l=1}u_{x_k x_l}\left[2 (a_{x_k}\cdot D^2u_{x_\ell}) + (a_{x_k x_l}\cdot D^2u)\right]\right) \nonumber \\
&& + \mu \left\{ aD^2u\cdot D^2u +Du\cdot D(\delta u -f) -\sum^n_{i,j=1}u_{x_ix_j}Da_{ij}\cdot Du\right\}+\lambda (a\cdot D^2\eta)\beta  \nonumber\\
&& +\beta'' \left(\eta^2 D^2u(D^2uDH)\cdot(D^2uDH) + \eta\beta''\lambda \gamma|D^2uDH|^2\right) + \beta'DH\cdot Dv + \nonumber\\
&& \beta'\left[\lambda\left(aD\eta\cdot D^2uDH + \eta\left\{a\cdot D^2uD^2HD^2u + DH\cdot D(\delta u -f)   -\sum^n_{i,j=1}u_{x_ix_j}Da_{ij}\cdot DH\right\}\right.\right.    \nonumber \\
&& \left. -\beta D\eta \cdot DH\huge{\text{)}} -\eta|D^2u|^2DH\cdot D\eta +\eta^2D^2u\cdot D^2u D^2H D^2u\right].
\end{eqnarray}

\par 2.  Let $x_0$ be a maximizing point for $v$. If  $x_0\in \partial O$, then $v\le v(x_0)\le \mu |Du(x_0)|^2\le C$. This of course implies $M_\epsilon^2\le v(x_0)\le C$ as desired. Now suppose that $x_0\in O$, so that 

$$
Dv(x_0)=0\quad \text{and}\quad a(x_0)\cdot D^2v(x_0)\le 0.
$$
Evaluating identity \eqref{hessD2v} at $x_0$, employing the assumed matrix inequalities $a\ge \gamma$ \eqref{aElliptic} and $D^2H\ge \theta $ \eqref{NewHcond}, and repeatedly using the Cauchy-Schwarz inequality gives us 
\begin{eqnarray}\label{hessD2v2}
0 & \ge &  - C -C |D^2u|^2 + \mu \left(\frac{1}{2} \gamma |D^2u|^2 -C\right)  \nonumber \\
 && + \eta\beta''|D^2uDH|^2(\gamma\lambda - \eta|D^2u|) + \nonumber\\
 &&+\beta'\left\{ \lambda\left(\frac{1}{2}\eta \gamma\theta|D^2u|^2  - C-C|D^2u|\right) -  C\eta|D^2u|^2 -C'\eta^2|D^2u|^3\right\}. \nonumber
\end{eqnarray}
Here $C$  is denotes various constants that are independent of $\epsilon$, but may depend on $|DH(Du(x_0))|$, $\eta(x_0)$, $D\eta(x_0)$, $D^2\eta(x_0)$ and 
$$
C':=\max_{|p|\le C}|D^2H(Du(p))|\ge |D^2H(Du(x_0))|.
$$
Observe that $C'$ is a universal constant by assumption \eqref{NewHcond}.

 \par 3. Set 
$$
\lambda=\lambda_\epsilon:= \alpha M_\epsilon
$$
where $\alpha> \max\{1/\gamma,C'\}$ is a positive constant. Notice that
\begin{eqnarray}\label{hessD2v22}
0 & \ge &  - C -C |D^2u|^2 + \mu \left(\frac{1}{2} \gamma |D^2u|^2 -C\right)  \nonumber \\
 && + \eta^2|D^2u|\beta''|D^2uDH|^2(\gamma \alpha - 1) + \nonumber\\
 &&+\beta'\left\{ \alpha\eta|D^2u|\left(\frac{1}{2}\eta \gamma\theta|D^2u|^2  - C-C|D^2u|\right) -  C\eta|D^2u|^2 -C'\eta^2|D^2u|^3\right\}\nonumber\\
 & \ge &  - C -C |D^2u|^2 + \mu \left(\frac{1}{2} \gamma |D^2u|^2 -C\right)  \nonumber \\
 &&+\beta'\left\{ \alpha\eta|D^2u|\left(\frac{1}{2}\eta \gamma\theta|D^2u|^2  - C-C|D^2u|\right) -  C\eta|D^2u|^2 -C'\eta^2|D^2u|^3\right\}.
\end{eqnarray}
\par  If for this choice of $\alpha $ 
$$
\alpha\eta|D^2u|\left(\frac{1}{2}\eta \gamma\theta|D^2u|^2  - C-C|D^2u|\right) -  C\eta|D^2u|^2 -C'\eta^2|D^2u|^3\le 0,
$$
then there is a bound on $\eta(x_0)|D^2u(x_0)|$ depending only on $\theta$, $ |DH(Du(x_0))|$, and  $C'$. If the above inequality does not hold, then  \eqref{hessD2v22} implies
$$
0  \ge   - C -C |D^2u|^2 + \mu \left(\frac{1}{2} \gamma |D^2u|^2 -C\right).  
$$
For $\mu$ chosen large enough, this inequality implies a universal bound on $\eta(x_0)|D^2u(x_0)|.$    

\par In all cases, we have bounded $\eta(x_0)|D^2u(x_0)|^2$ from above independently of $0< \epsilon<\theta$ (but perhaps dependent on $\theta$, and  $C'$) and therefore,
$$
M_\epsilon^2\le \max_{\overline{O}}v\le \eta(x_0)|D^2u(x_0)|^2 + CM_\epsilon +C\le C(M_\epsilon + 1).
$$
Consequently,  $M_\epsilon\le C_2$ for some $C_2$ (possibly) depending only on $O'$, $C'$ and $\theta$. 
\end{proof}

\section{Convergence}\label{ConvSec}
In this final section, we complete the proof Theorem \ref{mainTheorem}.  We have already established the existence of a unique viscosity solution in Proposition \ref{comparisonProp}; see Remark \ref{PerronRemark}.  The only issue remaining is the regularity of solutions, which will be handled by our results in the previous section. 
Note however, that we established Proposition \ref{PenalProp} under the extra hypotheses \eqref{NewHcond}, while Theorem \ref{mainTheorem} does make this assumption. The main idea to overcome this technicality is to employ the {\it inf-convolution}   
\begin{equation}\label{InfSupH}
H^t(p):=\inf_{q\in \R^n}\left\{H(q)+\frac{1}{2t}|p-q|^2\right\}, \quad (p,t)\in \R^n\times (0,\infty).
\end{equation}
We make use following lemma and omit the proof as it is elementary. 

\begin{lem}\label{propHt} Assume that $H:\R^n\rightarrow \R$ satisfies \eqref{Hcond}.\\
(i) $H^t(0)<0$, for $t>0.$\\
(ii) $H^t\rightarrow H$ locally uniformly on $\R^n$, as $t\rightarrow 0^+.$\\
(iii) $H^t$ is convex and for each $p,z\in \R^n$ and $t>0$
$$
H^t(p+z)-2H^t(p)+H^t(p-z)\le \frac{|z|^2}{t}.
$$
In particular, $0\le D^2H^t \le 1/t, \; a.e. \; p\in \R^n$.\\\\
Assume that $H$ satisfies \eqref{Hcond2} and $D^2H(p)\ge \theta,$ for a.e. $p\in \R^n$. \\
(iv) For each $t,R>0$, 
$$
H^t(p+z)-2H^t(p)+H^t(p-z)\le \left(|D^2H|_{L^\infty(B_{Q(R,t)+|z|})}\right)|z|^2
$$
for $|p|\le R$ and $z\in \R^n$, where 
\begin{equation}\label{Qfun}
Q(R,t):=2\left(R + t\max_{|w|\le R}|DH(w)|\right).\nonumber
\end{equation}
\\
(v) For each $p,z,\in \R^n, \; t>0$
$$
H^t(p+z)-2H^t(p)+H^t(p-z)\ge\frac{\theta|z|^2}{1+t\theta}.
$$
Hence, $D^2H^t \ge\theta/(1+t\theta), \; a.e.\; p\in \R^n$, $t>0$.
\end{lem}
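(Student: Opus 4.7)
The plan is to prove each item by directly exploiting the minimization structure of \eqref{InfSupH}. For each $p$ and $t>0$ the objective $q\mapsto H(q)+\tfrac{1}{2t}|p-q|^2$ is strictly convex and coercive, so a unique minimizer $q^*=q^*(p,t)$ exists and satisfies the first-order condition $(p-q^*)/t\in\partial H(q^*)$. All five parts are then standard consequences of this structure combined with the parallelogram identity, so the arguments are elementary in character.

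Parts (i)--(iii) are quick. For (i), take $q=0$ in the infimum to get $H^t(0)\le H(0)<0$. For (ii), the choice $q=p$ gives $H^t\le H$, while the subgradient inequality $H(p)-H(q^*)\le v\cdot(p-q^*)$ for $v\in\partial H(p)$ combined with $|p-q^*|^2/(2t)\le H(p)-H(q^*)$ yields $|p-q^*|\le 2t|v|$; hence $q^*\to p$ and $H^t(p)\to H(p)$ as $t\to 0^+$, locally uniformly because convex functions have locally bounded subgradients. Convexity of $H^t$ in (iii) is immediate from joint convexity in $(p,q)$ of the objective. For the upper bound on the second difference, I would use the common competitor $q=q^*(p,t)$ in $H^t(p\pm z)$; adding and applying the identity $|p+z-q^*|^2+|p-z-q^*|^2=2|p-q^*|^2+2|z|^2$ gives
\[
H^t(p+z)+H^t(p-z)\le 2H(q^*)+\tfrac{1}{2t}\bigl(2|p-q^*|^2+2|z|^2\bigr)=2H^t(p)+\tfrac{|z|^2}{t}.
\]

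For (iv), the first step is to bound the minimizer. Because $DH$ is monotone the resolvent $(I+tDH)^{-1}$ is $1$-Lipschitz, and since it sends $t\,DH(0)$ to $0$ one obtains $|q^*(p,t)|\le|p|+t|DH(0)|\le R+t\max_{|w|\le R}|DH(w)|=Q(R,t)/2$ whenever $|p|\le R$. Next, using the competitors $q^*(p,t)\pm z$ in the definitions of $H^t(p\pm z)$ (and $q^*(p,t)$ for $H^t(p)$), the penalty contributions cancel exactly as in (iii) and the second difference in $H^t$ is dominated by $H(q^*+z)+H(q^*-z)-2H(q^*)$. Integrating $\phi''(s):=D^2H(q^*+sz)z\cdot z$ against the weight $1-|s|$ over $[-1,1]$ then bounds this quantity by $\|D^2H\|_{L^\infty(B_{|q^*|+|z|})}|z|^2$, and $|q^*|+|z|\le Q(R,t)+|z|$ closes the argument.

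Part (v) is the technical centerpiece. I would take $q_\pm$ to be the actual minimizers for $H^t(p\pm z)$ and use $q_0:=(q_++q_-)/2$ as a competitor for $H^t(p)$. Uniform convexity gives $H(q_0)\le\tfrac12(H(q_+)+H(q_-))-\tfrac\theta8|q_+-q_-|^2$, while the identity
\[
\bigl|\tfrac{a+b}{2}\bigr|^2=\tfrac{|a|^2+|b|^2}{2}-\tfrac{|a-b|^2}{4},\qquad a:=p+z-q_+,\quad b:=p-z-q_-,
\]
yields $|p-q_0|^2=\tfrac12\bigl(|p+z-q_+|^2+|p-z-q_-|^2\bigr)-\tfrac14|2z-(q_+-q_-)|^2$. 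Combining these estimates and rearranging produces
\[
H^t(p+z)+H^t(p-z)-2H^t(p)\ge \tfrac{\theta}{4}|w|^2+\tfrac{1}{4t}|2z-w|^2,\qquad w:=q_+-q_-.
\]
The main step is then to \emph{optimize the right-hand side over $w\in\R^n$} rather than attempting to estimate $|q_+-q_-|$ directly; the minimum is attained at $w=2z/(1+t\theta)$ and equals $\theta|z|^2/(1+t\theta)$, which is the stated bound. The principal obstacle is recognizing this optimization step: any naive bound of $|q_+-q_-|$ in terms of $|z|$ loses the sharp dependence on $t$ and destroys the precise constant needed to match $D^2H^t\ge\theta/(1+t\theta)$.
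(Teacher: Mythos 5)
The paper gives no proof of this lemma (it is stated with ``we omit the proof as it is elementary''), so there is nothing to compare against; your argument is correct and complete. All five parts check out: the competitor choices $q^*(p,t)$ in (iii), $q^*(p,t)\pm z$ in (iv), and the midpoint $(q_++q_-)/2$ in (v) are the right ones, the resolvent bound $|q^*|\le |p|+t|DH(0)|$ justifies the radius $Q(R,t)$, and the final optimization over $w=q_+-q_-$ in (v) does recover the sharp constant $\theta/(1+t\theta)$ (which one can cross-check via Legendre duality, since $(H^t)^*=H^*+\tfrac{t}{2}|\cdot|^2$ and $D^2H\ge\theta$ is equivalent to $D^2H^*\le 1/\theta$).
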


\begin{proof} (of Theorem \ref{mainTheorem}) 1. First assume that $H$ satisfies \eqref{NewHcond}, so that we can apply Proposition \ref{PenalProp}.   In view of 
the conclusion of this proposition, a routine application of the Arzel\`{a}-Ascoli Theorem implies there is a sequence of $\epsilon_k\rightarrow 0$ such that
$$
\begin{cases}
u^{\epsilon_k}\rightarrow v \quad \text{uniformly in $\overline{O}$}\\
u^{\epsilon_k}\rightarrow v \quad \text{in $C^1_{\text{loc}}(O)$}
\end{cases},
$$
as $k\rightarrow \infty$.  That is, $u^{\epsilon_k}\rightarrow v$ uniformly in $\overline{O}$ and $u^{\epsilon_k}\rightarrow v$ in $C^1(O')$ for each $O'\subset\subset O$, as 
$k\rightarrow \infty.$  It is clear from the above convergence that $v\in C^{1,1}_{\text{loc}}(O)\cap C^{0,1}(O).$ We now {\bf claim}: $v$ is a viscosity solution of \eqref{mainPDE} and therefore has to coincide with $u$ by the uniqueness of viscosity solutions of \eqref{mainPDE}.

\par 2.    Suppose that
$v-\varphi $ has a local maximum at $x_0\in O$ and that $\varphi\in C^2(O)$. We must show 
\begin{equation}\label{vviscsub}
\max\left\{\delta v(x_0)- a(x_0)\cdot D^2\varphi(x_0) - f(x_0), H(D\varphi(x_0)) \right\}\le 0.
\end{equation}
By adding $x\mapsto \frac{\rho}{2} |x-x_0|^2$ to $\varphi$ and later sending $\rho\rightarrow 0$, we may assume that $v-\varphi $ has a {\it strict} local maximum.  Since $u^{\epsilon_k}$ 
converges to $v$ uniformly (for some sequence $\epsilon_k\rightarrow 0$) as $k\rightarrow \infty$, there is a sequence of $x_k$ such that

$$
\begin{cases}
x_k\rightarrow x_0,\quad \text{as}\; k\rightarrow \infty\\
u^{\epsilon_k}-\varphi \;\;\text{has a local maximum at $x_k$}
\end{cases}.
$$
As $u^{\epsilon_k}$ is a smooth solution of \eqref{penalizedPDE}, we have 
$$
\delta u^{\epsilon_k}(x_k)- a(x_k)\cdot D^2\varphi(x_k) + \beta_{\epsilon_k}(H(D\varphi(x_k)))\le f(x_k).
$$
Since $\beta_\epsilon \ge 0$, we can send $k\rightarrow \infty$ to arrive at
$$
\delta v(x_0)- a(x_0)\cdot D^2\varphi(x_0)\le f(x_0). 
$$
By Lemma \ref{betaLem},
$$
0\le  \beta_{\epsilon_k}(H(D\varphi(x_k)))=\beta_{\epsilon_k}(H(Du^{\epsilon_k}(x_k)))\le C,
$$
which necessarily implies that when $k\rightarrow \infty$
$$
H(D\varphi(x_0))\le 0.
$$

\par 3. Now suppose that 
$v-\psi $ has a local minimum at $x_0\in O$ and that $\psi\in C^2(O)$. We must show 
\begin{equation}\label{vviscsup}
\max\left\{\delta v(x_0)- a(x_0)\cdot D^2\psi(x_0) - f(x_0), H(D\psi(x_0)) \right\}\ge 0.
\end{equation}
Arguing as above, we discover there is a sequence $\epsilon_k\rightarrow 0$ as $k\rightarrow \infty$, and $x_k$ such that
$$
\begin{cases}
x_k\rightarrow x_0,\quad \text{as}\; k\rightarrow \infty\\
u^{\epsilon_k}-\psi \;\;\text{has a local minimum at $x_k$}
\end{cases}.
$$
If 
$$
H(D\psi(x_0))\ge 0,
$$
then \eqref{vviscsup} holds. Suppose now that 
$$
H(D\psi(x_0))< 0.
$$
Since $u^\epsilon$ is a smooth solution of \eqref{penalizedPDE}, we have 
\begin{equation}\label{uepsKineq}
\delta u^{\epsilon_k}(x_k)- a(x_k)\cdot D^2\psi(x_k) + \beta_{\epsilon_k}(H(D\psi(x_k)))- f(x_k)\ge 0.
\end{equation}
By the convergence established in part 1 of this proof, $H(D\psi(x_k))= H(Du^{\epsilon_k}(x_k))<0$ for all large enough $k$. Hence,
$$
\lim_{k\rightarrow \infty}\beta_{\epsilon_k}(H(D\psi(x_k)))=0.
$$ 
In this case, the above limit and \eqref{uepsKineq} imply
$$
\max\left\{\delta v(x_0)- a(x_0)\cdot D^2\psi(x_0) - f(x_0), H(D\psi(x_0)) \right\}\ge \delta v(x_0)- a(x_0)\cdot D^2\psi(x_0) - f(x_0)\ge 0.
$$
This argument verifies the claim that $v=u$ and in particular, it proves Theorem \ref{mainTheorem} under 
the hypothesis that $H$ satisfies \eqref{NewHcond}.

\par 4.  Now assume that $H$ satisfies \eqref{Hcond} and define for $t,\rho,\theta \in (0,1)$ the function 
$$
H^{t,\rho, \theta}(p):=\theta |p|^2 + H^{t,\rho}(p), \quad p\in \R^n,
$$
where
$$
H^{t,\rho}(p):=\int_{\R^n}\eta^\rho(p-y)H^t(y)dy, \quad p\in \R^n.
$$
Here $H^t$ is the inf-convolution of $H$ given in \eqref{InfSupH}, $\eta^\rho(x)=\rho^{-n}\eta(x/\rho)$, $\eta\in C^\I_c(\R^n)$, $\int_{\R^n}\eta(x)dx=1$, and $\eta(x)=0$ for $|x|\ge 1.$ Therefore, $H^{t,\rho}\in C^\I(\R^n),$ and as $\rho$ tends to $0$, $H^{\rho,t}$ converges to $H^t$
locally uniformly (consult Appendix C in \cite{Evans} for more on mollification).  

\par By parts $(i)$ and $(iii)$ of Lemma \ref{propHt}, $H^{t,\rho,\theta}$ satisfies \eqref{NewHcond} and (for $\rho$ small enough) \eqref{Hcond}.  Hence, 
$$
\begin{cases}
\max\left\{Lu-f, H^{t,\rho,\theta}(Du)\right\}=0, \; x\in O\\
\hspace{1.74in} u=0, \; x\in \partial O
\end{cases}
$$
has a unique solution $u^{t,\rho,\theta}\in C^{1,1}_{\text{loc}}$.  Moreover, an easy conclusion of the convergence assertion in parts $1-3$ of the current proof gives the estimate
$$
|u^{t,\rho,\theta}|_{C^{1,\alpha}(O')}\le C(O',\alpha).
$$
for each $t,\rho,\theta,\alpha \in (0,1)$ and $O'\subset\subset O$.  It follows that there is are sequences $t_k,\rho_k,\theta_k\rightarrow 0$ and $u^{t_k,\rho_k,\theta_k}$ converging in $C^1_{\text{loc}}(O)$ to some $u\in C^{1,\alpha}_{\text{loc}}(O)\cap C^{0,1}(O)$, as $k\rightarrow \infty$. By part $(ii)$ of Lemma \ref{propHt} and the stability of viscosity solutions, $u$ is the unique viscosity solution of \eqref{mainPDE}.  This completes the proof of part $(i)$ of Theorem \ref{mainTheorem}.

\par 5. Now assume in addition that $H$ satisfies \eqref{Hcond2}. According to parts $(iv)$ and $(v)$ of Lemma \ref{propHt},  $H^{t,\rho,\theta}$ also satisfies \eqref{Hcond2}. By assumption, $D^2H\ge \theta'$ for some $\theta'\in (0,1)$. Lemma \ref{propHt} implies that $D^2H^{t,\rho,\theta}\ge \frac{1}{2}\theta'$, for $t>0$ small enough.  The convergence assertion in parts $1-3$ of the current proof implies the estimate
$$
|u^{t,\rho,\theta}|_{C^{1,1}(O')}\le C\left(O',\theta',\max_{|p|\le C}|D^2H^{t,\rho,\theta}(p)| \right)
$$
for $t,\rho$, $\theta>0$ and some universal constant $C$.  Noting that part $(iv)$ of Lemma \ref{propHt} has that $D^2H^{t,\rho,\theta}$ is a.e. locally bounded above, independently of all $t,\rho$, $\theta$ positive and small, we are able to conclude as in part 4 of this proof. 
\end{proof}

\par As a final remark, we mention equations with general boundary conditions 
\begin{equation}\label{genBound}
\begin{cases}
\max\{Lu-f,H(Du)\}=0, \; & x\in O\\
\hspace{1.46in} u=g, \; & x\in \partial O
\end{cases},
\end{equation}
can be handled similarly to PDE \eqref{mainPDE}.  The method presented in this paper works with little alteration provided there exists a subsolution $\underline{u}\in C^1(\overline{O})$ of \eqref{genBound} such that $ \underline{u}|_{\partial \Omega}=g$ and 
$$
H(D\underline{u}(x))<0, \quad x\in O.
$$

\noindent  {\bf Acknowledgements}:  The author would like to thank his PhD dissertation advisor Professor L.C. Evans for his guidance with this work.  Some of this work was completed 
while the author was visiting the home of Theodore P. Hill and Erika Rogers; the author expresses his deepest thanks to Ted and Erika for their hospitality.

\appendix

\newpage


\end{document}